\numberwithin{equation}{section}
\theoremstyle{plain}
\newtheorem{thm}{Theorem}[section]
\newtheorem{lem}[thm]{Lemma}
\newtheorem{prop}[thm]{Proposition}
\newtheorem{defn}[thm]{Definition}
\newcommand{\enter}{\bigskip}
\begin{document}
\author{Prasanta Kumar Barik \\
\footnotesize  Tata Institute of Fundamental Research Centre for Applicable Mathematics,\\  \small{Bangalore-560065, Karnataka, India}\\
  }
\title {Gelation in coagulation and multiple fragmentation equation with a class of singular rates }
\date{}
\maketitle

\hrule \vskip 8pt

\begin{quote}
{\small {\em\bf Abstract.} In this paper, a partial integro-differential equation modeling of coagulation and multiple fragmentation events is studied.
Our purpose is to investigate the global existence of gelling weak solutions to the continuous coagulation and multiple fragmentation
equation for a certain class of coagulation rate, linear selection rate and breakage function. Here, the coagulation rate has singularity for small mass
(size) and growing as polynomial function of mass for large particles whereas the breakage function attains singularity near the origin.
Moreover, a weak fragmentation process is considered for large mass particles to prove this result. The gelation transition is also discussed separately for both Smoluchowski coagulation equation and combined form of coagulation and multiple fragmentation equation. Finally, the long time behavior of solutions of the coagulation and multiple fragmentation is demonstrated.}
 \enter
\end{quote}

{\bf Keywords:} Gelation; Existence; Weak compactness; Weak fragmentation; Long time behavior\\
\hspace{.5cm}
{\bf MSC (2010):} Primary: 45K05; Secondary: 45K30.

\vskip 10pt \hrule

\section{Introduction}\label{Introduction1}
This paper concerns with the continuous coagulation and multiple fragmentation model which is a partial integro-differential equation. This model describes the dynamics of particle growth or decay under the assumption that each particle is identified by its mass (or size) which is denoted by a positive real number. The system under consideration can be considered as consisting of a large number of particles that can aggregate, by means of binary reactions, to form bigger particles or splits into smaller ones due to the reaction between particles and the wall in the system or spontaneously. This model has huge applications in different fields such as formation of stars and planets in astrophysics, raindrop breakage in meteorology science, polymerization in chemistry, aggregation of blood cells and cell division in biology and many more.\\

 Let $g$ be the distribution function of particles, then the dynamics of $g$ reads as \cite{Melzak:1957, McLaughlin:1997, Giri:2012, Giri:2013, Camejo:2015, Laurencot:2018, Barik:2019Existence}
\begin{align}\label{Cmfe}
\frac{\partial g (m, t) }{\partial t} = &
  \frac{1}{2} \int_{0}^{m} C^K(m-m^{\ast}, m^{\ast})g(m-m^{\ast}, t)g(m^{\ast}, t)dm^{\ast} \nonumber\\
  &+ \int_m^{\infty}b(m|m^{\ast})\mathcal{S}^R(m^{\ast})g(m^{\ast}, t)dm^{\ast}\nonumber\\
 & - \int_{0}^{\infty} C^K(m, m^{\ast})g(m, t)g(m^{\ast}, t)dm^{\ast} -\mathcal{S}^R(m)g(m, t),
\end{align}
with the initial datum
\begin{align}\label{Initialdata}
g(m, 0) = g^{in}(m)\ge 0~ \mbox{a.e.}
\end{align}

Here, the distribution function $g$ is a function of mass and time, and are represented by $m \in \mathds{R}_{>0} := (0, \infty)$ and $t \ge 0$, respectively. The familiar coagulation kernel $C^K(m, m^{\ast})$ is a non-negative and symmetric function which shows the rate at which particles of mass $m$ join with particles of mass $m^{\ast}$ to form a bigger one $m+m^{\ast}$ and  $b(m|m^{\ast})$ is the fragment distribution function describing the expected number of particles of mass $m$ produced given that a particle of mass $m^{\ast}$ undergoes a breakup event. The linear fragment rate is denoted by $\mathcal{S}^R(m)$ which is also known as the \emph{selection rate} and describes the rate at which the particle of mass $m$ is selected to break. Finally, the breakage function is assumed to satisfy the following properties
\begin{equation}\label{TNP}
\left.
\begin{split}
\int_0^{m^{\ast}} b(m|m^{\ast})dm =& \eta(m^{\ast})\ \forall m^{\ast} \in \mathds{R}_{>0},\ \text{where}\
\underset{m^{\ast}\in \mathds{R}_{>0}}{\sup} \eta(m^{\ast})=\eta <\infty,\\
 b(m|m^{\ast})=& 0 \ \ \ \forall \ m \ge m^{\ast},
\end{split}
\right\}
\end{equation}
and
\begin{align}\label{MCP}
\int_0^{m^{\ast}} m b(m|m^{\ast})dm =m^{\ast},\ \ \ \forall m\in (0, m^{\ast}).
\end{align}

In (\ref{TNP}), $\eta $ is the supremum of $\eta(m^{\ast})$ which is greater than equal to $2$. This $\eta(m^{\ast})$ denotes the total number of young particles obtained from the breakage of mother particles of mass $m^{\ast}$ and whereas (\ref{MCP}) shows that the total mass in the system remains conserved during the breakage of mother particle. Rest of the paper, we assume $b(m|m^{\ast})=(\gamma+2)\frac{m^{\gamma}}{{m^{\ast}}^{1+\gamma}}$, for $-1< \gamma \le 0$. One can easily check that this breakage function satisfies \eqref{TNP} and \eqref{MCP}.\\

The first term and the second term in the right-hand side to \eqref{Cmfe} give the birth of particles of mass $m$ after coalescing with particles of masses $m^{\ast}$ and, $m-m^{\ast}$ due to the coagulation process and the breakage of particle of mass $m^{\ast}$ into daughter particles due to the multiple fragmentation process, respectively. While the third and fourth terms describe the death of particles of mass $m$ due to both coagulation and multiple fragmentation events, respectively.\\

Now, we define the total mass of particles in the system for the distribution function $g$ associated to the coagulation and
multiple fragmentation equation (CMFE) as
 \begin{align}\label{Totalmass}
\mathcal{N}_1(t)=\mathcal{N}_1(g(t)):=\int_0^{\infty} m g(m, t)dm,  \ \ t \ge 0.
\end{align}

 From the conservation of matter we know that the mass is neither created nor destroyed but it changes from one state to another
state by some physical and chemical processes. Thus, in a closed system of particles, we expect that the total mass will also conserve
during both the coagulation and multiple fragmentation events. However, if the coagulation reaction is very high compared to the fragmentation
reaction, the large mass particles are merged very fast to form a giant particle in the system. Furthermore, this giant particle may go
away from the system. This process is known as \emph{gelation transition} and the finite time at which this process starts is known as the
 \emph{gelling time or gelation time} \cite{Escobedo:2003, Leyvraz:1981}.\\

The pure coagulation equation was first deduced by the pioneering work of Smoluchowski \cite{Smoluchowski:1917} for discrete case as a model for Brownian motion of particles which is also known as the \emph{Smoluchowski coagulation equation} (SCE) and later M\"{u}ller \cite{Muller:1928} gave the continuous version of this kind of model. Next, a combined effect of binary coagulation and multiple fragmentation is investigated by Melzak \cite{Melzak:1957} for continuous case. The global well-posedness for the continuous SCE, coagulation and binary fragmentation equation (CFE) and CMFE has been extensively studied for non-singular kernels in \cite{Melzak:1957, Stewart:1989, Stewart:1990, DaCosta:1994, DaCosta:1995, Dubovskii:1996, Laurencot:2000On, Escobedo:2002, Laurencot:2002, Escobedo:2003, Giri:2012, Giri:2013, Laurencot:2015, Barik:2017Anote} and references therein. In addition, the existence of gelling and mass-conserving solutions have been investigated in \cite{Escobedo:2002, Escobedo:2003, Laurencot:2000On}. While the existence and uniqueness of solutions to the continuous CMFE with singular rates have been discussed in \cite{Barik:2018Mass, Camejo:2015, Laurencot:2018, Norris:1999, Laurencot:2018Uni, Laurencot:2019MassSelf, Laurencot:2019Mass2}. The existence of weak solutions to the continuous SCE locally in time is studied by Norris \cite{Norris:1999} when the coagulation rate $A$ that satisfies $A(m, m^{\ast}) \le a(m) a(m^{\ast})$, with $a: (0, \infty) \rightarrow [0, \infty)$ and $a(c m)\le c a(m)$  for all $m \in (0, \infty)$, $c \ge 1$, where $c$ is a sub-linear function and the initial data $g^{in} \in L^1((0, \infty); a(m)^2)$. By imposing an additional condition on the coagulation rate, i.e., for $\epsilon >0$ such that $\epsilon m \le a(m)$, the mass-conserving property of the solution is also studied. By using a weak $L^1$ compactness method, Camejo and Warnecke \cite{Camejo:2015} have studied the existence of solutions to the continuous CMFE for the singular rate, where the rate $A_2$ and the selection rate $S_1$, respectively, satisfy
 \begin{equation*}
 A_2(m, m^{\ast}) \le k (1+m)^{\alpha}(1+m^{\ast})^{\alpha}({m}{m^{\ast}})^{-\sigma},\ \text{for}\  \sigma \in [0, 1/2), \alpha -\sigma \in [0,1)\ \text{and}\ k>0,
 \end{equation*}
 and
 \begin{equation*}
S_1(y) \le k' m^{\beta}\ \text{where}\ \beta \in (0, 1)\ \text{and}\ k'>0.
 \end{equation*}
In addition, the uniqueness result is shown for $A_2$ when $\alpha =0$. Recently, we have studied the existence of mass-conserving solutions to the continuous SCE having linear growth for large masses and singularity for small mass particles for coagulation rate whatever the approximations to the original problems, see \cite{Barik:2018Mass} and also we have studied a similar type of result for the continuous CMFE where the selection rate satisfies a linear growth, see \cite{Barik:2019Existence}. In \cite{Laurencot:2000On}, the author has investigated the existence of weak solutions to a particular type of coagulation and multiple fragmentation equation for product type of coagulation rate with a weak fragmentation. In addition, the gelation transition is also discussed. The novelty of the present work is that we have considered a more general coagulation rate i.e., the coagulation rate has singularity for small size particles and polynomial growth for large one. However, to cover this growth, a weak fragmentation condition is considered on the selection rate. This work is a generalization of the previous work \cite{Laurencot:2000On} and \cite{Barik:2019Existence}. The motivation of the present work is form \cite{Laurencot:2000On}.\\

The rest of paper is organized as follows. In Section $2$, we summarize some assumptions on the initial data, coagulation kernel, selection rate and the breakage function used throughout the paper. In addition, some lemmas and main theorems of this paper is also stated in this section. Theorem $2.2$ is proved by using a weak $L^1$ compactness method in Section $3$. In Section $4$, gelation transition is studied for the pure coagulation equation. Finally, the gelation transition and asymptotic behaviour of the solutions is discussed for the CMFE in the last section.

\section{Main results}
Our main goal in this paper is to show the existence of gelling solutions to \eqref{Cmfe}--\eqref{Initialdata} when the coagulation rate $C^K$ and the breakage function $b$ have singularity near the origin and the selection rate $\mathcal{S}^R$ is unbounded for large size particles. Moreover, the coagulation rate $C^K$ has polynomial growth for large size particles. In addition, a weak fragmentation condition has imposed for the large size particles. More precisely, we make the following assumptions on the initial data $g^{in}$, the coagulation rate $C^K$, selection rate $\mathcal{S}^R$ and breakage function $b$:\\
Let $\sigma \in [0, (1+\gamma)/2)$ such that
\begin{equation}\label{coagulation kernel}
\left.
\begin{split}
0 \le C^K(m, m^{\ast})  & = C^K(m^{\ast}, m)  = k_1 (m m^{\ast} )^{-\sigma},\  (m, m^{\ast})\in (0, 1)^2 , \\
0 \le C^K(m, m^{\ast})  &=  C^K(m^{\ast}, m) = k_1 \Gamma(m^{\ast}) {m}^{-\sigma},\  (m, m^{\ast}) \in (0, 1) \times [1, \infty),\\
0 \le C^K(m, m^{\ast})  &=  C^K(m^{\ast}, m)  = k_1 \Gamma(m) \Gamma(m^{\ast}),\  (m, m^{\ast}) \in [1, \infty)^2,
\end{split}
\right\}
\end{equation}
where $k_1>0$ and $\Gamma$ is a polynomial function.\\
 There exists a positive constant $k_2 = \frac{(\gamma +2)}{(1+\gamma-2\sigma)} >2$ such that
\begin{eqnarray}\label{Breakage function1}
\int_0^{m^{\ast}} m^{-2\sigma}b(m|m^{\ast})dm \le k_2 {m^{\ast}}^{-2\sigma}.
\end{eqnarray}
The selection rate $\mathcal{S}^R$ satisfies
\begin{align}\label{Selection Rate}
\mathcal{S}^R(m) \le k_3 \varphi(m) m^{1+\gamma},\ \forall m \in \mathds{R}_{>0},
\end{align}
where $k_3 \ge 0$, and a non-decreasing and non-negative function $\varphi$ satisfies
\begin{align}\label{Nondecreasing function}
\lim_{m \to \infty} \varphi(m) =0.
\end{align}
Furthermore,
\begin{align}\label{Selection RateAdditional}
\mathcal{S}^R(m) \in L^{\infty}(0, \lambda),
\end{align}
where $\lambda$ is a positive real constant. In addition, there exists a $p \in(1, 2)$ (depending $\sigma$ and $\gamma$) such that $p(\gamma-\sigma)+1>0$.\\ \\
Note that from \eqref{Selection Rate} and \eqref{Nondecreasing function}, it is clear that the rate of splitting of large size particles are very slow. Thus, the condition \eqref{Selection Rate} along with \eqref{Nondecreasing function} is also known as the \emph{weak fragmentation}.\\
Finally, the initial data $g^{in}$ enjoys the following
\begin{align}\label{Initialdatacondition}
 g^{in} \in L^1_{-2\sigma, 1}(\mathds{R}_{>0}),
 \end{align}
where $L^1_{-2\sigma, 1}(\mathds{R}_{>0})$ is a Banach space  and
\begin{align}\label{Initialdatafinite}
\mathcal{Q}:=\int_0^{\infty} (m+m^{-2\sigma}) g^{in}(m) dm < \infty.
\end{align}
Next, we state the definition of weak solutions to \eqref{Cmfe}--\eqref{Initialdata}.
\begin{defn}\label{definition weak}
 Let the initial data $g^{in} \ge 0$ a.e. and satisfies \eqref{Initialdatacondition}. Then a weak solution to \eqref{Cmfe}--\eqref{Initialdata} is a non-negative function $g \in \mathcal{C}([0,t]; L^1(\mathds{R}_{>0} ) ) \cap L^{\infty}(0, t; L_{-2\sigma, 1}^1( \mathds{R}_{>0}))$ such that
 \begin{align*}
& \int_0^{\infty} \int_0^{\infty} C^K(m, m^{\ast}) g(m, t) g(m^{\ast}, t) dm^{\ast} dm  \in L^1(0, t),\nonumber\\
& \int_0^{\infty} \int_m^{\infty} b(m|m^{\ast}) \mathcal{S}^R(m^{\ast}) g(m^{\ast}, t) dm^{\ast} dm  \in L^1(0, t),
 \end{align*}
  and
  \begin{align}\label{defweaksolidentity}
 g(m, t) = & g^{in}(m) + \int_0^t   \bigg\{ \frac{1}{2} \int_{0}^{m} C^K(m-m^{\ast}, m^{\ast})g(m-m^{\ast}, \tau)g(m^{\ast}, \tau)dm^{\ast} \nonumber\\
  &+ \int_m^{\infty}b(m|m^{\ast})\mathcal{S}^R(m^{\ast})g(m^{\ast}, \tau)dm^{\ast}\nonumber\\
 & - \int_{0}^{\infty} C^K(m, m^{\ast})g(m, \tau)g(m^{\ast}, \tau)dm^{\ast} -\mathcal{S}^R(m)g(m, \tau) \bigg\} d\tau
 \end{align}
 for almost every $m \in \mathds{R}_{>0} $ and $t \in (0, T]$, where $T \in \mathds{R}_{>0}$.
\end{defn}
 Now our main results can be stated as follows:
\begin{thm}\label{Theorem1}
Assume that \eqref{coagulation kernel}--\eqref{Initialdatacondition} hold. Then there exists at least one solution (in the sense of definition \ref{definition weak}) to \eqref{Cmfe}--\eqref{Initialdata} on $[0, \infty)$ satisfying
\begin{equation}\label{Initialmass}
\mathcal{N}_1(t) \le \mathcal{N}_1^{in}:=\int_0^{\infty} m g^{in}(m) dm.
\end{equation}
\end{thm}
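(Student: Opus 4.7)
The plan is a regularization--compactness argument in the tradition of Stewart and Laurençot. For each integer $n\ge 1$ I would define truncated kernels
\[
C^K_n := C^K \mathds{1}_{[1/n,\,n]^2},\qquad \mathcal{S}^R_n := \mathcal{S}^R \mathds{1}_{[0,\,n]},\qquad b_n(m|m^\ast) := b(m|m^\ast)\, \mathds{1}_{[1/n,\,n]}(m),
\]
together with the truncated initial datum $g_n^{in} := g^{in} \mathds{1}_{[1/n,\,n]}$. All coefficients are now bounded, so the classical fixed-point argument produces a unique nonnegative $g_n \in \mathcal{C}([0,\infty); L^1(\mathds{R}_{>0}))$ solving the truncated weak identity.

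\textbf{Uniform estimates.} The core of the argument is to derive $n$-independent bounds allowing weak-$L^1$ compactness. I would establish: (i) mass subconservation $\int_0^\infty m\, g_n(m,t)\, dm \le \mathcal{N}_1^{in}$, obtained by testing the approximate equation against $m$ and using \eqref{MCP}; (ii) a uniform bound on the singular moment $\int_0^\infty m^{-2\sigma} g_n(m,t)\, dm$, obtained by testing against $m^{-2\sigma}$, using \eqref{Breakage function1} for the fragmentation gain and the inequality $(m+m^\ast)^{-2\sigma}\le m^{-2\sigma}+(m^\ast)^{-2\sigma}$ combined with the explicit form of $C^K$ in \eqref{coagulation kernel} for the coagulation gain; (iii) Dunford--Pettis equi-integrability of $\{g_n(\cdot,t)\}$. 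Since the weak-fragmentation hypothesis \eqref{Selection Rate}--\eqref{Nondecreasing function} precludes propagating any super-linear moment at infinity, step (iii) is delicate: I would exploit the auxiliary exponent $p\in(1,2)$ with $p(\gamma-\sigma)+1>0$ from \eqref{Selection RateAdditional} to build a tailored De~la~Vallée-Poussin weight, balancing the contribution of the $(mm^\ast)^{-\sigma}$ singularity near the origin against the polynomial growth of $\Gamma$ at infinity.

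\textbf{Compactness and passage to the limit.} The Dunford--Pettis theorem then yields a subsequence with $g_n\rightharpoonup g$ in $L^1((0,T)\times \mathds{R}_{>0})$ for every $T>0$. To upgrade to pointwise-in-$t$ weak convergence I would verify equicontinuity of the truncated moments $t\mapsto \int_0^R m\, g_n(m,t)\, dm$ directly from the approximate equation, and then apply an Arzelà--Ascoli style argument in the spirit of Laurençot--Mischler. Passage to the limit in \eqref{defweaksolidentity} is routine for the linear terms; for the quadratic coagulation terms I would split $(0,\infty)^2$ as $(0,\varepsilon)\cup(\varepsilon,R)\cup(R,\infty)$, using the singular moment bound on the first piece, strong $L^1_{\mathrm{loc}}$ compactness of $g_n$ on the middle piece, and the first-moment bound on the tail. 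Finally, testing the limiting equation against $m\wedge R$ and letting $R\to\infty$ produces \eqref{Initialmass}, the inequality reflecting possible loss of mass into a gel.

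\textbf{Main obstacle.} In my view the principal difficulty, separating the present setting from the linearly-fragmenting regime of \cite{Barik:2019Existence}, is the absence of any super-linear moment at infinity. Such moments are the customary source of equi-integrability for large masses; here weak fragmentation allows gelation, so they cannot be propagated. The role of the exponent $p\in(1,2)$ with $p(\gamma-\sigma)+1>0$ is precisely to supply a substitute for this missing moment, and producing a single convex weight that simultaneously controls the singular coagulation rate near zero and its polynomial growth at infinity is, to my mind, the most delicate step of the proof.
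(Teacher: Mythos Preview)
Your overall architecture---truncate, obtain uniform moment bounds, prove Dunford--Pettis equi-integrability, Arzel\`a--Ascoli, pass to the limit---is exactly what the paper does. Two points of divergence deserve comment.

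\medskip
\textbf{Minor.} The paper truncates $C^K$, $\mathcal{S}^R$, and $g^{in}$ but \emph{not} the breakage function $b$; your truncation $b_n=b\,\mathds{1}_{[1/n,n]}$ is unnecessary and slightly complicates the use of~\eqref{MCP} and~\eqref{Breakage function1}.

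\medskip
\textbf{Substantive.} You have misidentified both the main obstacle and the role of the exponent~$p$. Equi-integrability for large masses is \emph{not} an issue here: the first-moment bound from your step~(i) already gives $\int_\lambda^\infty g_n(m,t)\,dm\le\lambda^{-1}\mathcal{N}_1^{in}$, which is precisely the tail control the paper uses (its Lemma~3.5, estimate~\eqref{LemmaUniform Integrability2}). No super-linear moment is needed, and~$p$ is not a substitute for one.

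The genuine difficulty in the equi-integrability step lies on the \emph{small-mass} side, and comes from the fragmentation birth term. The paper follows the direct Stewart-style route: for a measurable set~$B$ with~$|B|\le\delta$ and fixed~$\lambda>1$ it bounds $\int_0^\lambda \chi_B(m)(1+m^{-\sigma})g_n(m,t)\,dm$ by estimating each term of the truncated equation and applying Gronwall. The delicate contribution is
\[
\int_0^{m^\ast}\chi_B(m)\,(1+m^{-\sigma})\,b(m|m^\ast)\,dm
=\frac{\gamma+2}{(m^\ast)^{\gamma+1}}\int_0^{m^\ast}\chi_B(m)\,(m^\gamma+m^{\gamma-\sigma})\,dm,
\]
and H\"older's inequality with exponent~$p$ yields $\int_0^{m^\ast}\chi_B(m)\,m^{\gamma-\sigma}\,dm\le|B|^{(p-1)/p}\bigl(\int_0^{m^\ast}m^{p(\gamma-\sigma)}\,dm\bigr)^{1/p}$. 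Convergence of the last integral near zero is exactly the condition $p(\gamma-\sigma)+1>0$. So~$p$ controls the singularity of~$b$ near the origin; it has nothing to do with large masses or with~$\Gamma$.

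Your proposed De~la~Vall\'ee-Poussin weight aimed at the tail is therefore chasing the wrong enemy. If you redirect your equi-integrability argument to the fragmentation birth term near zero and use~$p$ via H\"older as above, the rest of your outline carries through essentially as written.
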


\begin{thm}\label{Theorem2}
Assume that \eqref{Initialdatacondition} holds and $\mathcal{S}^R \equiv 0$. Assume further that there exists $\lambda >1$ such that
\begin{align}\label{condicoag}
\Gamma(m) \ge \lambda m^{1+\sigma},\ \ \ m \ge 0,
\end{align}
\begin{equation}\label{coagkerGel}
 C^K(m, m^{\ast})  = k_1 \frac{\Gamma(m) \Gamma(m^{\ast})}{(m m^{\ast})^{\sigma}},\  (m, m^{\ast}) \in (0,, \infty)^2.
\end{equation}
 Let $g$ be a solution to \eqref{Cmfe}--\eqref{Initialdata} on $[0, \infty)$, then
\begin{equation}\label{GelationCoa1}
\mathcal{N}_1(t) \le \frac{ \sqrt{2 \mathcal{N}_0^{in}}    }{\lambda \sqrt{k_1}} t^{-\frac{1}{2}}.
\end{equation}
If
\begin{align}\label{GelationCoa2}
\mathcal{I}_{p}(t) := \int_0^{\infty} m^{-p} g^{in}(m) dm < \infty,
\end{align}
for some $p \in (0, \infty)$, then
\begin{align}\label{GelationCoa3}
\mathcal{N}_1(t) \le \mathcal{N}_1^{in} \bigg\{ 1+t T^{\dag} \bigg\}^{\frac{-(p+1)}{(p+2)}},
\end{align}
where
\begin{align*}
T^{\dag} =\frac{(p+2)}{2p}  { \mathcal{N}_1^{in}}^{\frac{(p+2)}{(p+1)}} k_1 \lambda^2  \mathcal{I}_p^{\frac{-1}{(p+1)}}.
\end{align*}
Finally, if $g^{in} \equiv 0$ on $(0, \delta)$ for some $\delta >0$, we have
\begin{align}\label{GelationCoa4}
\mathcal{N}_1(t)  \le  \mathcal{N}_1^{in}\sqrt{2} \bigg[ 2+ k_1 \delta \lambda^2 t {\mathcal{N}_1^{in}}^2  \bigg]^{-\frac{1}{2}}.
\end{align}
\end{thm}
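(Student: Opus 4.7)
My plan is to test the weak formulation of the pure coagulation equation (corresponding to $\mathcal{S}^R \equiv 0$) against suitable functions $\phi$, exploiting throughout the lower bound $C^K(m, m^*) \ge k_1 \lambda^2 m m^*$, which follows from \eqref{condicoag} inserted into \eqref{coagkerGel}. Choosing $\phi \equiv 1$ gives
$$
\dot{\mathcal{N}}_0(t) = -\tfrac{1}{2}\iint C^K(m,m^*) g(m,t) g(m^*,t)\, dm\, dm^* \le -\tfrac{k_1 \lambda^2}{2}\mathcal{N}_1(t)^2.
$$
Testing against the truncation $\phi_R(m) = m \wedge R$, whose integrand $\phi_R(m+m^*) - \phi_R(m) - \phi_R(m^*)$ is pointwise non-positive, and letting $R \to \infty$ shows that $t \mapsto \mathcal{N}_1(t)$ is non-increasing. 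Combining this monotonicity with the above differential inequality yields the master estimate
$$
\mathcal{N}_1(t)^2 (t-s) \le \int_s^t \mathcal{N}_1(\tau)^2\, d\tau \le \frac{2\, \mathcal{N}_0(s)}{k_1 \lambda^2}, \qquad 0 \le s < t.
$$
Setting $s = 0$ gives \eqref{GelationCoa1} at once.

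For \eqref{GelationCoa3}, the log-convexity of moments applied to the exponents $-p, 0, 1$ gives $\mathcal{N}_0(s)^{p+1} \le M_{-p}(s)\, \mathcal{N}_1(s)^p$, where $M_{-p}(s) := \int_0^\infty m^{-p} g(m,s)\, dm$. Testing the weak formulation against $\phi(m) = m^{-p}$ and using $(m+m^*)^{-p} \le (m^*)^{-p}$ shows that $M_{-p}$ is non-increasing, hence $M_{-p}(s) \le \mathcal{I}_p$; it follows that $\mathcal{N}_0(s) \le \mathcal{I}_p^{1/(p+1)}\mathcal{N}_1(s)^{p/(p+1)}$. Plugging this into the master inequality with $s = t/2$ yields the recursive bound
$$
\mathcal{N}_1(t)^2 \le \frac{4\, \mathcal{I}_p^{1/(p+1)}}{k_1 \lambda^2\, t}\, \mathcal{N}_1(t/2)^{p/(p+1)}.
$$
A bootstrap argument (equivalently, the self-similar ansatz $\mathcal{N}_1(t) \sim C t^{-\alpha}$) identifies the fixed-point exponent $\alpha = (p+1)/(p+2)$, and careful tracking of constants through the iteration recovers \eqref{GelationCoa3} with the stated $T^{\dag}$. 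For \eqref{GelationCoa4}, the support condition $g^{in} \equiv 0$ on $(0,\delta)$ propagates under the coagulation flow since coagulation only increases particle mass, so $\mathrm{supp}\, g(\cdot, t) \subset [\delta, \infty)$ and hence $\mathcal{N}_0(s) \le \mathcal{N}_1(s)/\delta$. Inserting this into the master inequality and bootstrapping analogously---now with exponent $1$ in place of $p/(p+1)$---produces a cubic differential inequality $\dot{\mathcal{N}}_1 \le -\tfrac{1}{4}k_1 \delta \lambda^2 \mathcal{N}_1^3$ whose explicit integration yields \eqref{GelationCoa4}.

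The principal technical obstacle for \eqref{GelationCoa3} and \eqref{GelationCoa4} is the bootstrap step used to close the recursive inequality: verifying convergence to the fixed-point exponent and extracting the \emph{sharp} constants $T^\dag$ (resp.\ the exact coefficient in \eqref{GelationCoa4}) requires care, since log-convexity supplies only an upper bound on $M_{1-p}$ and naive substitution into the differential inequality for $M_{-p}$ goes in the wrong direction. A clean alternative is to recognise that the recursion is, upon differentiating in $t$, equivalent to a closed differential inequality $\dot{\mathcal{N}}_1 \le -C\, \mathcal{N}_1^{(2p+3)/(p+1)}$ which integrates directly to the stated bound; deriving this differentiated form rigorously, with the correct constant, is where the sharp exponent and the factor $(p+2)/(2p)$ appearing in $T^{\dag}$ naturally emerge.
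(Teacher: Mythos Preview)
Your proposal is essentially correct and matches the paper's argument in all the structural ingredients: the paper, too, tests the weak formulation (Lemma~\ref{Lemma51}) against $\Theta\equiv 1$ to obtain $\dot{\mathcal N}_0\le -\tfrac{k_1\lambda^2}{2}\mathcal N_1^2$, against $\Theta(m)=m\wedge\lambda^{\dag}$ to get the monotonicity of $\mathcal N_1$ (Lemma~\ref{Lemma52}), against $\Theta(m)=(m+\zeta)^{-p}$ (and lets $\zeta\to 0$ via Fatou) to propagate $M_{-p}(t)\le\mathcal I_p$, and against $\Theta=\chi_{(0,\delta)}$ to show $g(\cdot,t)\equiv 0$ on $(0,\delta)$, giving $\mathcal N_0(s)\le \delta^{-1}\mathcal N_1(s)$. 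The H\"older/log-convexity step $\mathcal N_0\le \mathcal I_p^{1/(p+1)}\mathcal N_1^{p/(p+1)}$ is identical.

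Where you should be decisive is the closing step. The paper does \emph{not} iterate the master inequality with $s=t/2$; that route would work qualitatively but will not hand you the exact constant $T^{\dag}$. Instead the paper takes precisely your ``clean alternative'': it sends $t\to\infty$ in the master inequality to get $\int_s^{\infty}\mathcal N_1(\tau)^2\,d\tau\le c\,\mathcal N_1(s)^{p/(p+1)}$ and then differentiates this in $s$ (Leibniz on the left, chain rule on the right, using the a.e.\ differentiability of the non-increasing $\mathcal N_1$) to obtain the separable ODE $\mathcal N_1'(s)\le -\tfrac{p+1}{cp}\,\mathcal N_1(s)^{(2p+3)/(p+1)}$, which integrates directly to \eqref{GelationCoa3} with the stated $T^{\dag}$. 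The same mechanism applied to $\int_s^{\infty}\mathcal N_1^2\le \tfrac{2}{k_1\delta\lambda^2}\mathcal N_1(s)$ yields \eqref{GelationCoa4}. So drop the bootstrap and commit to the differentiation-in-$s$ argument---that is the paper's route, and it is where the sharp constants come from cleanly.
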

\begin{thm}\label{Theorem3}
Assume that \eqref{Breakage function1} and \eqref{Initialdatacondition}--\eqref{Initialdatafinite} hold. Assume further that \eqref{condicoag} and \eqref{coagkerGel} hold. Furthermore, the selection rate satisfies 
\begin{align}\label{SelectionRate1}
\mathcal{S}^R(m) \le k_3 \varphi(m) m,
\end{align}
where $\varphi$ is defined in \eqref{Nondecreasing function}. Let $g$ be a solution to \eqref{Cmfe}--\eqref{Initialdata} on $[0, \infty)$, then
 \begin{align}\label{GelationCMFEeq2}
\mathcal{N}_1(t)  \le \frac{k_3}{\lambda^2}  (\eta -1) \varphi(0) +& \bigg[ \frac{k_3^2}{\lambda^4} (\eta-1)^2 \varphi(0)^2+\frac{2\mathcal{Q}} {t\lambda^2}  \bigg]^{1/2}
\end{align}
Then gelation occurs in a finite time. Moreover, as $t \to \infty$, then
\begin{align}\label{GelationCMFEeq3}
\lim_{t \to \infty}\mathcal{N}_1(t)  \le \frac{2k_3}{\lambda^2}  (\eta -1) \varphi(0).
\end{align}
\end{thm}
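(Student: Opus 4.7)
My plan is to extract a differential inequality for the zeroth moment $\mathcal{N}_0(t) := \int_0^\infty g(m,t)\,dm$ and then convert it into a pointwise bound on $\mathcal{N}_1(t)$ by combining Cauchy--Schwarz with the monotonicity $\mathcal{N}_1(s) \ge \mathcal{N}_1(t)$ for $s \le t$. This is the fragmentation-source analogue of the argument that underlies Theorem \ref{Theorem2}.

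Testing \eqref{Cmfe} against the constant function $\phi \equiv 1$ (justified rigorously by performing the computation at the level of the approximating sequence used in the proof of Theorem \ref{Theorem1} and passing to the limit), the Fubini-type change of variables in the coagulation gain term and the identity $\int_0^{m^*} b(\cdot|m^*)\,dm = \eta(m^*)$ from \eqref{TNP} yield
\begin{align*}
\frac{d\mathcal{N}_0}{dt}(t) = -\frac{1}{2}\int_0^\infty\!\!\int_0^\infty\! C^K(m,m^*) g(m,t) g(m^*,t)\,dm\,dm^* + \int_0^\infty \mathcal{S}^R(m)\bigl(\eta(m)-1\bigr) g(m,t)\,dm.
\end{align*}
From \eqref{condicoag}--\eqref{coagkerGel} one has $C^K(m,m^*) \ge k_1\lambda^2 m m^*$, while \eqref{SelectionRate1} together with the monotonicity of $\varphi$ gives $\mathcal{S}^R(m) \le k_3\varphi(0) m$, and $\eta(m) \le \eta$ by \eqref{TNP}. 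Combining these produces
\begin{align*}
\frac{d\mathcal{N}_0}{dt}(t) \le -\frac{k_1\lambda^2}{2}\,\mathcal{N}_1(t)^2 + k_3(\eta-1)\varphi(0)\,\mathcal{N}_1(t).
\end{align*}

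Integrating on $[0,t]$, discarding the non-negative term $\mathcal{N}_0(t)$, and estimating $\int_0^t \mathcal{N}_1\,ds \le \sqrt{t}\,(\int_0^t \mathcal{N}_1^2\,ds)^{1/2}$ via Cauchy--Schwarz turns the resulting inequality into a quadratic inequality in $X := (\int_0^t \mathcal{N}_1(s)^2\,ds)^{1/2}$, which is solved explicitly for $X$ in terms of $\mathcal{N}_0^{in}$, $t$, and the structural constants. Next, applying Theorem \ref{Theorem1} to the shifted problem with initial datum $g(\cdot,s)$ gives $\mathcal{N}_1(t) \le \mathcal{N}_1(s)$ for every $0 \le s \le t$, hence $t\,\mathcal{N}_1(t)^2 \le X^2$; combining this with the quadratic bound on $X$ and the elementary estimate $\mathcal{N}_0^{in} \le \mathcal{Q}$ (which holds because $1 \le m + m^{-2\sigma}$ for all $m \in \mathds{R}_{>0}$) delivers \eqref{GelationCMFEeq2}. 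Passing $t \to \infty$ in \eqref{GelationCMFEeq2} immediately yields \eqref{GelationCMFEeq3}, and finite-time gelation is then a direct consequence: whenever $\mathcal{N}_1^{in}$ exceeds the asymptote $2k_3(\eta-1)\varphi(0)/\lambda^2$, the right-hand side of \eqref{GelationCMFEeq2} falls strictly below $\mathcal{N}_1^{in}$ once $t$ is sufficiently large.

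The main technical obstacle is making the differential identity for $\mathcal{N}_0'(t)$ rigorous: the singularity of $C^K$ at the origin and its polynomial growth at infinity mean that $\iint C^K g g$ is not automatically finite, and the selection term requires similar care. The remedy is to work at the level of the truncated approximants from the proof of Theorem \ref{Theorem1}, where all integrands are bounded, derive the inequality for those solutions, and pass to the limit using the weak-$L^1$ compactness already in place; the same machinery also gives propagation of the $L^1_{-2\sigma,1}$ integrability \eqref{Initialdatacondition} needed to apply Theorem \ref{Theorem1} at any intermediate time $s$, which is how the monotonicity of $\mathcal{N}_1$ is secured.
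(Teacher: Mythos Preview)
Your strategy coincides with the paper's: test the weak formulation with $\Theta\equiv 1$ to obtain
\[
\int_0^t \mathcal{N}_1(\tau)^2\,d\tau \;\le\; \frac{2}{\lambda^2}\,\mathcal{Q} \;+\; \frac{2k_3(\eta-1)\varphi(0)}{\lambda^2}\int_0^t \mathcal{N}_1(\tau)\,d\tau,
\]
combine this with Cauchy--Schwarz (the paper writes it as Jensen) and the monotonicity $\mathcal{N}_1(t)\le\mathcal{N}_1(s)$, and solve the resulting quadratic. The only cosmetic difference is that the paper solves the quadratic in $\mathcal{I}(t)=\int_0^t\mathcal{N}_1\,d\tau$ and then uses $t\,\mathcal{N}_1(t)\le\mathcal{I}(t)$, whereas you solve it in $X=\bigl(\int_0^t\mathcal{N}_1^2\bigr)^{1/2}$ and use $t\,\mathcal{N}_1(t)^2\le X^2$; both routes yield \eqref{GelationCMFEeq2}.

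One step needs repair. You justify $\mathcal{N}_1(t)\le\mathcal{N}_1(s)$ by ``applying Theorem~\ref{Theorem1} to the shifted problem with initial datum $g(\cdot,s)$,'' but Theorem~\ref{Theorem1} is an \emph{existence} statement and no uniqueness is assumed, so the solution it produces need not be your given $g$ shifted in time. The paper instead proves monotonicity directly for any weak solution (Lemma~\ref{LemmaestimateCMFE}): test Lemma~\ref{Identity Multifragment} with $\Theta(m)=m\wedge\lambda^{\dag}$, observe that $\tilde\Theta\le 0$ and $\Pi_\Theta(m)\le\lambda^{\dag}(\eta-1)$, and send $\lambda^{\dag}\to\infty$ using \eqref{Nondecreasing function} and $g\in L^\infty(0,t;L^1_{-2\sigma,1})$. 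That argument uses only the weak formulation for the given $g$ and avoids any appeal to uniqueness.
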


\section{Existence of solutions}
Before turning to the proof of the main existence result in Theorem \ref{Theorem1} to \eqref{Cmfe}--\eqref{Initialdata}, we first discuss the existence and uniqueness of solutions to truncated problems \eqref{Cmfe}--\eqref{Initialdata}. For this, we truncate the coagulation rate and the selection rate by using a suitable compact support $(1/n,  n)$. For each $n \in \mathds{N}$, we define a sequence of approximations of $C^K$, $\mathcal{S}^R$ and $g^{in}$, respectively,
\begin{align}\label{truncatedcoaker}
C^K_n(m, m^{\ast})= C^K(m, m^{\ast}) \chi_{(1/n, n)}(m) \chi_{(1/n, n)}(m^{\ast}),
\end{align}
\begin{align}\label{truncatedselrate}
\mathcal{S}^R_n(m)=\mathcal{ S}^R(m) \chi_{(0, n)}(m),
\end{align}
and
\begin{align}\label{truncatedinidata}
g^{in}_n(m)= g^{in}(m) \chi_{(0, n)}(m).
\end{align}
From \eqref{truncatedcoaker} and \eqref{truncatedselrate}, it is clear that both the coagulation kernel and the selection rate are
bounded for each $n$. Hence, $\Gamma_n(m)=\Gamma(m) \chi_{(0, n)}(m)$ is also bounded for each $n$.\\
Now, setting \eqref{truncatedcoaker}--\eqref{truncatedinidata} into \eqref{Cmfe}--\eqref{Initialdata}, we obtain
\begin{align}\label{Cmfetruncated}
\frac{\partial g_n (m, t) }{\partial t}  = & \frac{1}{2} \int_{0}^{m} C^K_n(m-m^{\ast}, m^{\ast})g_n(m-m^{\ast}, t)g_n(m^{\ast}, t)dm^{\ast} \nonumber\\
 &+ \int_m^{n}b(m|m^{\ast})\mathcal{S}^R_n(m^{\ast})g_n(m^{\ast}, t)dm^{\ast} \nonumber\\
 & - \int_{0}^{n} C^K_n(m, m^{\ast})g_n(m, t) g_n(m^{\ast}, t)dm^{\ast} - \mathcal{S}^R_n(m)g_n(m, t),
\end{align}
with initial value
\begin{align}\label{Initialdatatruncated}
g_n(m, 0) = g^{in}(m)\chi_{(0, n)}(m) \ge 0~ \mbox{a.e.}
\end{align}
We now turn to state the existence and uniqueness result to \eqref{Cmfetruncated}--\eqref{Initialdatatruncated}.
\begin{prop}\label{Prop1}
Let $T \in (0, \infty)$ and $n \ge 1$. Then for each $n$, there exists  a unique non-negative solution $g_n$ to \eqref{Cmfetruncated}--\eqref{Initialdatatruncated} such that $g_n \in \mathcal{C}^1([0, T], L^1(0, n))$ and for every $(m, t) \in (0, n) \times \mathds{R}_{>0}$, it satisfies
\begin{align}\label{Truncatedsolutions}
g_n(m, t)=g_n^{in}(m) & + \int_0^t \bigg\{  \frac{1}{2} \int_{0}^{m} C^K_n(m-m^{\ast}, m^{\ast})g_n(m-m^{\ast}, \tau)g_n(m^{\ast}, \tau)dm^{\ast} \nonumber\\
 &+ \int_m^{n}b(m|m^{\ast})\mathcal{S}^R_n(m^{\ast})g_n(m^{\ast}, \tau)dm^{\ast} \nonumber\\
 & - \int_{0}^{n} C^K_n(m, m^{\ast})g_n(m, \tau) g_n(m^{\ast}, \tau)dm^{\ast} - \mathcal{S}^R_n(m)g_n(m, \tau) \bigg\} d\tau,
\end{align}
where $ t\in (0, T]$.
\end{prop}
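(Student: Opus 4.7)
The proof will follow the classical approach of recasting the truncated equation \eqref{Truncatedsolutions} as an integral equation in the Banach space $X_n := L^1(0, n)$ and applying a Picard--Lindel\"of fixed-point argument. The first observation is that on the compact interval $(0, n)$ all truncated coefficients are bounded: $C^K_n$ is bounded thanks to the compact support $(1/n, n)^2$ combined with the continuity of $\Gamma$, and $\mathcal{S}^R_n$ is bounded on $(0, n)$ by combining \eqref{Selection RateAdditional} on $(0, \lambda)$ with the boundedness of $\varphi(m) m^{1+\gamma}$ on $[\lambda, n]$ that follows from \eqref{Selection Rate} and \eqref{Nondecreasing function}.

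Next, defining the right-hand side of \eqref{Truncatedsolutions} (aside from the initial datum) as a nonlinear operator $\mathcal{F}: X_n \to X_n$, I would verify that $\mathcal{F}$ is locally Lipschitz. The two coagulation terms are bilinear in $g_n$, and with $C^K_n$ bounded they are continuous bilinear forms on $X_n$; the fragmentation loss term is linear and bounded. The only delicate point is the fragmentation gain term, because $b(\cdot | m^\ast)$ has a singularity at the origin when $\gamma < 0$; however, Fubini together with \eqref{TNP} gives
\begin{equation*}
\int_0^n \Bigl| \int_m^n b(m|m^\ast) \mathcal{S}^R_n(m^\ast) h(m^\ast) \, dm^\ast \Bigr| \, dm \le \eta \, \|\mathcal{S}^R_n\|_\infty \, \|h\|_{X_n},
\end{equation*}
so this operator is linear and bounded on $X_n$. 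The Picard--Lindel\"of theorem then yields a unique maximal $\mathcal{C}^1([0, T_{\max}); X_n)$ solution $g_n$ satisfying \eqref{Truncatedsolutions}.

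To finish, one has to verify non-negativity and rule out finite-time blow-up in $X_n$. Non-negativity of $g_n$ on the existence interval is standard: absorb the linear loss $\mathcal{S}^R_n(m) g_n(m,t) + g_n(m,t) \int_0^n C^K_n(m,m^\ast) g_n(m^\ast, t)\, dm^\ast$ into an integrating factor, observe that the remaining gain terms preserve positivity, and iterate on a short time interval starting from $g_n^{in} \ge 0$. For the a priori $X_n$ bound, integrate \eqref{Cmfetruncated} with respect to $m$ over $(0, n)$: the coagulation contribution is nonpositive, since each binary coalescence reduces the particle count by one, while the fragmentation contribution is controlled using \eqref{TNP} by $(\eta - 1) \|\mathcal{S}^R_n\|_\infty \|g_n(t)\|_{X_n}$. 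A Gronwall argument then yields $\|g_n(t)\|_{X_n} \le \|g_n^{in}\|_{X_n} e^{(\eta-1) \|\mathcal{S}^R_n\|_\infty t}$ on any $[0,T]$, which precludes blow-up and gives global existence on $[0,T]$. The only genuinely technical obstacle is the singularity of $b$ near the origin, which is handled by the Fubini estimate above; everything else is routine ODE theory in a Banach space.
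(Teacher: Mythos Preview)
The paper does not actually prove Proposition~\ref{Prop1}; it states the result and immediately moves on to Lemma~\ref{Lemma1}, treating the existence and uniqueness of solutions to the truncated problem as a standard consequence of the boundedness of $C^K_n$ and $\mathcal{S}^R_n$. Your sketch supplies exactly the routine argument the paper omits: a Picard--Lindel\"of fixed point in $L^1(0,n)$ using the boundedness of the truncated kernels, the Fubini/\eqref{TNP} estimate to tame the singular fragmentation gain, non-negativity via an integrating factor, and a Gronwall bound on $\|g_n(t)\|_{L^1}$ to rule out blow-up. This is correct and is the approach one would expect the paper to have in mind; there is nothing to contrast.
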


To establish the existence result in our framework, the following identity is crucial:
\begin{lem}\label{Lemma1}
Let $\Theta$ be a locally bounded function on $\mathds{R}_{>0}$. Then for each $n \in \mathds{N}$, $ T \in (0, \infty )$ and $\lambda^{\ast} \in (0, n]$,
we have
\begin{align}\label{1truncatedidentity}
\int_0^{\lambda^{\ast}} \{ g_n(m, t) & - g_n^{in}(m) \} \Theta(m)dm \nonumber\\
=&\frac{1}{2}\int_0^t \int_0^{\lambda^{\ast}} \int_{0}^{\lambda^{\ast}} \tilde{\Theta}_{\lambda^{\ast}}(m, m^{\ast}) C^K_n(m, m^{\ast})g_n(m, \tau) g_n(m^{\ast}, \tau)dm^{\ast} dm d\tau \nonumber\\
&- \int_0^t \int_0^{\lambda^{\ast}} \int_{ \lambda^{\ast} }^{n} \Theta(m)  C^K_n(m, m^{\ast})g_n(m, \tau) g_n(m^{\ast}, \tau)dm^{\ast} dm d\tau \nonumber\\
&+\int_0^t \int_0^{\lambda^{\ast}} \Pi_{\Theta}(m)\mathcal{S}_n^R(m) g_n(m, \tau)dm d\tau \nonumber\\
&+\int_0^t \int_{ \lambda^{\ast} }^{n}  \int_0^{\lambda^{\ast}} \Theta(m) b(m|m^{\ast}) \mathcal{S}_n^R(m^{\ast}) g_n(m^{\ast}, \tau)dm dm^{\ast} d\tau,
\end{align}
where
\begin{align}\label{1Identity1}
\tilde{\Theta}_{\lambda^{\ast}}(m, m^{\ast}):=\Theta(m+m^{\ast})\chi_{(0, \lambda^{\ast})}(m+m^{\ast})-\Theta(m)-\Theta(m^{\ast})
\end{align}
and
\begin{align}\label{1Identity2}
\Pi_{\Theta}(m):=\int_0^m b(m^{\ast}|m)\Theta(m^{\ast})dm^{\ast}- \Theta(m).
\end{align}
In particular, if $\lambda^{\ast} =n$, we have the following identity
\begin{align}\label{truncatedidentity}
\int_0^{n} \{ g_n(m, t) & - g_n^{in}(m) \} \Theta(m)dm \nonumber\\
=&\frac{1}{2}\int_0^t \int_0^{n} \int_{0}^{n}\tilde{\Theta}_n(m, m^{\ast}) C^K_n(m, m^{\ast})g_n(m, \tau) g_n(m^{\ast}, \tau)dm^{\ast} dm d\tau \nonumber\\
&+\int_0^t \int_0^{n} \Pi_{\Theta}(m)\mathcal{S}_n^R(m) g_n(m, \tau)dm d\tau,
\end{align}
where
\begin{align}\label{Identity1}
\tilde{\Theta}_n (m, m^{\ast}):=\Theta(m+m^{\ast})\chi_{(0, n)}(m+m^{\ast})-\Theta(m)-\Theta(m^{\ast}).
\end{align}
 Furthermore, $g_n$ satisfies
\begin{align}\label{truncated mass}
\int_0^n m g_n(m, t) dm \le \int_0^n m g_n^{in}(m) dm,
\end{align}
for every $t \in [0, T]$.
\end{lem}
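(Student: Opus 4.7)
The plan is to start from the integrated form \eqref{Truncatedsolutions} of the truncated equation, multiply by the test function $\Theta(m)$, integrate $m$ over $(0,\lambda^{\ast})$, and use Fubini's theorem together with suitable changes of variables to rearrange the four resulting double integrals into the compact form \eqref{1truncatedidentity}. Because of the truncation, $C_n^K$ and $\mathcal{S}_n^R$ are bounded, $g_n \in \mathcal{C}^1([0,T], L^1(0,n))$ by Proposition~\ref{Prop1}, and $\Theta$ is locally bounded on $(0,n)$, so all double integrals are absolutely convergent and every exchange of order of integration I make is legitimate.

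For the coagulation part, the birth term gives
\[
\frac{1}{2}\int_0^{\lambda^{\ast}} \Theta(m) \int_0^m C_n^K(m-m^{\ast},m^{\ast}) g_n(m-m^{\ast},\tau) g_n(m^{\ast},\tau)\, dm^{\ast}\, dm,
\]
and after Fubini (with $m^{\ast}\in(0,\lambda^{\ast})$ outer, $m\in(m^{\ast},\lambda^{\ast})$ inner) and the substitution $m'=m-m^{\ast}$, this becomes
\[
\tfrac{1}{2}\int_0^{\lambda^{\ast}}\int_0^{\lambda^{\ast}} \Theta(m'+m^{\ast})\chi_{(0,\lambda^{\ast})}(m'+m^{\ast}) C_n^K(m',m^{\ast}) g_n(m',\tau)g_n(m^{\ast},\tau)\, dm'\, dm^{\ast}.
\]
For the coagulation death term I split $\int_0^n = \int_0^{\lambda^{\ast}}+\int_{\lambda^{\ast}}^n$ and symmetrize the $(0,\lambda^{\ast})^2$ piece using $C_n^K(m,m^{\ast})=C_n^K(m^{\ast},m)$ to replace $\Theta(m)$ by $\tfrac{1}{2}(\Theta(m)+\Theta(m^{\ast}))$. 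Subtracting this from the symmetrized birth term produces exactly $\tilde{\Theta}_{\lambda^{\ast}}$ in the first line on the right of \eqref{1truncatedidentity}, while the leftover $(0,\lambda^{\ast})\times(\lambda^{\ast},n)$ piece is the second line.

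For the fragmentation part, I apply Fubini to the birth integral
\[
\int_0^{\lambda^{\ast}} \Theta(m) \int_m^n b(m|m^{\ast})\mathcal{S}_n^R(m^{\ast})g_n(m^{\ast},\tau)\, dm^{\ast}\, dm,
\]
splitting the new outer range into $m^{\ast}\in(0,\lambda^{\ast})$ (with inner $m\in(0,m^{\ast})$) and $m^{\ast}\in(\lambda^{\ast},n)$ (with inner $m\in(0,\lambda^{\ast})$). Combining the first piece with the fragmentation death term $-\int_0^{\lambda^{\ast}}\Theta(m)\mathcal{S}_n^R(m)g_n(m,\tau)\, dm$ and relabelling the bound variable yields $\Pi_{\Theta}(m)$ as in \eqref{1Identity2}, giving the third line of \eqref{1truncatedidentity}; the second piece is exactly the fourth line. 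Integrating in $\tau$ over $(0,t)$ finishes \eqref{1truncatedidentity}. The specialization \eqref{truncatedidentity} follows immediately by setting $\lambda^{\ast}=n$, since then $\int_{\lambda^{\ast}}^n\,dm^{\ast}=0$ kills the second and fourth lines.

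For \eqref{truncated mass}, I choose $\Theta(m)=m$ in \eqref{truncatedidentity}. The mass-conservation property \eqref{MCP} gives $\Pi_{\Theta}(m)=\int_0^m m^{\ast} b(m^{\ast}|m)\, dm^{\ast}-m=0$, so the fragmentation contribution vanishes. For the coagulation contribution, $\tilde{\Theta}_n(m,m^{\ast})=(m+m^{\ast})\chi_{(0,n)}(m+m^{\ast})-m-m^{\ast}$ is $0$ when $m+m^{\ast}<n$ and equals $-(m+m^{\ast})\le 0$ otherwise, so the whole right-hand side of \eqref{truncatedidentity} is non-positive and the claimed inequality follows. I expect the main bookkeeping difficulty to be keeping the four subregions of integration straight in the Fubini arguments; the analytic content is light because the truncation makes all integrands in $L^\infty$ on compact sets and $g_n\in\mathcal{C}^1([0,T],L^1(0,n))$ supplies the needed absolute integrability.
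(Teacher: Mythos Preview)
Your proof is correct and follows essentially the same route as the paper: multiply the truncated integral equation by $\Theta$, integrate over $(0,\lambda^{\ast})$, apply Fubini and the substitution $m'=m-m^{\ast}$ to the coagulation birth term, symmetrize the $(0,\lambda^{\ast})^2$ death piece using $C_n^K(m,m^{\ast})=C_n^K(m^{\ast},m)$ to produce $\tilde{\Theta}_{\lambda^{\ast}}$, and split the fragmentation birth integral via Fubini into the $\Pi_{\Theta}$ term and the remainder over $(\lambda^{\ast},n)$. The specialization $\lambda^{\ast}=n$ and the choice $\Theta(m)=m$ with \eqref{MCP} for the mass bound are exactly what the paper does as well. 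One minor wording point: the $\tau$-integral is already present in \eqref{Truncatedsolutions}, so there is no separate ``integrating in $\tau$'' step at the end, but this does not affect the argument.
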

\begin{proof}
Let us multiply \eqref{Truncatedsolutions} by $ \Theta(m) $ and integrating from $0$ to $\lambda^{\ast}$ with respect to mass variable $m$, this gives
\begin{align}\label{IdentityE1}
\int_0^{\lambda^{\ast}} \Theta(m) & \{ g_n(m, t)-g_n^{in}(m) \} dm \nonumber\\
= & \frac{1}{2} \int_0^t \int_0^{\lambda^{\ast}} \int_0^m \Theta(m) C_n^K(m-m^{\ast}, m^{\ast}) g_n(m-m^{\ast}, \tau) g_n(m^{\ast}, \tau) dm^{\ast} dm d\tau \nonumber\\
&- \int_0^t \int_0^{\lambda^{\ast}}\int_0^n \Theta(m) C_n^K(m, m^{\ast}) g_n(m, \tau) g_n(m^{\ast}, \tau) dm^{\ast} dm d\tau \nonumber\\
&+ \int_0^t \int_0^{\lambda^{\ast}} \int_m^n \Theta(m) b(m|m^{\ast}) \mathcal{S}^R_n(m^{\ast}) g_n(m^{\ast}, \tau) dm^{\ast} dm d\tau \nonumber\\
&- \int_0^t \int_0^{\lambda^{\ast}}\Theta(m) \mathcal{S}_n^R(m) g_n(m, \tau) dm d\tau.
\end{align}
Now we use Fubini's theorem to the first and third terms on the right-hand side of \eqref{IdentityE1}. By applying the transformation $m-m^{\ast}=m'$ \& $m^{\ast}={m^{\ast}}'$ to the first term and the symmetry of $C_n^K$, and finally changing $m'$ to $m$ \& ${m^{\ast}}'$ to $m^{\ast}$, we get
\begin{align*}
\int_0^{\lambda^{\ast}} \Theta(m) & \{ g_n(m, t)- g_n^{in}(m) \} dm \nonumber\\
 = & \frac{1}{2} \int_0^t \int_0^{\lambda^{\ast}} \int_0^{\lambda^{\ast}-m} \Theta(m+m^{\ast}) C_n^K(m, m^{\ast}) g_n(m, \tau) g_n(m^{\ast}, \tau) dm^{\ast} dm d\tau \nonumber\\
&- \int_0^t \int_0^{\lambda^{\ast}} \int_0^n \Theta(m) C_n^K(m, m^{\ast}) g_n(m, \tau) g_n(m^{\ast}, \tau) dm^{\ast} dm d\tau \nonumber\\
&+ \int_0^t \int_0^{\lambda^{\ast}} \int_0^{m^{\ast}} \Theta(m) b(m|m^{\ast}) \mathcal{S}^R_n(m^{\ast}) g_n(m^{\ast}, \tau) dm dm^{\ast} d\tau \nonumber\\
&+ \int_0^t \int_{\lambda^{\ast}}^n \int_0^{\lambda^{\ast}} \Theta(m) b(m|m^{\ast}) \mathcal{S}^R_n(m^{\ast}) g_n(m^{\ast}, s) dm dm^{\ast} d\tau \nonumber\\
&- \int_0^t \int_0^{\lambda^{\ast}} \Theta(m) \mathcal{S}_n^R(m) g_n(m, \tau) dm d\tau \nonumber\\
= & \frac{1}{2} \int_0^t \int_0^{\lambda^{\ast}} \int_0^{\lambda^{\ast}} \tilde{\Theta}_{\lambda^{\ast}}(m, m^{\ast}) C_n^K(m, m^{\ast}) g_n(m, \tau) g_n(m^{\ast}, \tau) dm^{\ast} dm d\tau \nonumber\\
&- \int_0^t \int_0^{\lambda^{\ast}} \int_{\lambda^{\ast}-m}^n \Theta(m) C_n^K(m, m^{\ast}) g_n(m, \tau) g_n(m^{\ast}, \tau) dm^{\ast} dm d\tau \nonumber\\
&+ \int_0^t \int_0^{\lambda^{\ast}}  \Pi_{\Theta}(m^{\ast}) \mathcal{S}^R_n(m^{\ast}) g_n(m^{\ast}, \tau) dm^{\ast} d\tau \nonumber\\
&+ \int_0^t \int_{\lambda^{\ast}}^n \int_0^{\lambda^{\ast}} \Theta(m) b(m|m^{\ast}) \mathcal{S}^R_n(m^{\ast}) g_n(m^{\ast}, \tau) dm dm^{\ast} d\tau.
\end{align*}
This finishes the proof of \eqref{1truncatedidentity}. Next set ${\lambda^{\ast}} = n$ into \eqref{1truncatedidentity}, we have
\begin{align}\label{IdentityE2}
\int_0^n \{ g_n(m, t) & - g_n^{in}(m) \} \Theta(m)dm \nonumber\\
=&\frac{1}{2}\int_0^t \int_0^n \int_{0}^n \tilde{\Theta}_{n}(m, m^{\ast}) C^K_n(m, m^{\ast})g_n(m, \tau) g_n(m^{\ast}, \tau)dm^{\ast} dm d\tau \nonumber\\
&- \int_0^t \int_0^n \int_{ n }^{n} \Theta(m)  C^K_n(m, m^{\ast})g_n(m, \tau) g_n(m^{\ast}, \tau)dm^{\ast} dm d\tau \nonumber\\
&+\int_0^t \int_0^{n} \Pi_{\Theta}(m)\mathcal{S}_n^R(m) g_n(m, \tau)dm d\tau \nonumber\\
&+\int_0^t \int_{ n }^{n}  \int_0^{n} \Theta(m) b(m|m^{\ast}) \mathcal{S}_n^R(m^{\ast}) g_n(m^{\ast}, \tau)dm dm^{\ast} d\tau,
\end{align}
From the Lebesgue's dominated convergence theorem, $g_n \in L^1(0, n)$ and the boundedness of $C^K_n$, $\mathcal{S}^R_n$, we deduce that the second and fourth integrals in the right-hand side to \eqref{IdentityE2} are zero. Thus, we get
\begin{align*}
\int_0^{n} \{ g_n(m, t) & - g_n^{in}(m) \} \Theta(m)dm \nonumber\\
=&\frac{1}{2}\int_0^t \int_0^{n} \int_{0}^{n}\tilde{\Theta}_n(m, m^{\ast}) C^K_n(m, m^{\ast})g_n(m, s) g_n(m^{\ast}, s)dm^{\ast} dm ds\nonumber\\
&+\int_0^t \int_0^{n} \Pi_{\Theta}(m) \mathcal{S}_n^R(m) g_n(m, s)dm ds.
\end{align*}
This proves \eqref{truncatedidentity}. Finally, one can be easily proved \eqref{truncated mass} by setting $ \Theta(m) \equiv m \chi_{(0, n)}(m)$ into \eqref{truncatedidentity}. This completes the proof of Lemma \ref{Lemma1}.
\end{proof}

To prove Theorem \ref{Theorem1} our aim is to apply a weak $L^1$ compactness technique to the family of truncated solutions $\{g_n\}_{n >1}$ which has been adopted from the technique used in the classical work of Stewart \cite{Stewart:1989} and Lauren\c{c}ot \cite{Laurencot:2000On} to demonstrate the existence of weak solutions to continuous CFE and the existence of gelling solutions to continuous coagulation and multiple fragmentation equation, respectively. The first step in this direction is to check the uniformly bound of the family of solutions $\{g_n\}_{n >1}$.
\subsection{Uniform Bound}
\begin{lem}\label{LemmaUniformbound}
Fix $T>0$. Assume \eqref{coagulation kernel}--\eqref{Initialdatacondition} hold. Then for every $n \ge 1$ and $t \in [0, T]$, there exists a positive constant $\mathcal{A}(T)$ (depending on $T$) such that
\begin{align*}
 \int_0^{n} (m^{-2\sigma }+m) g_n(m, t) dm \le \mathcal{A}(T).
\end{align*}
\end{lem}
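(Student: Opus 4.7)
The plan is to estimate the two moments separately. The $m$-moment is immediate: setting $\Theta(m)=m\chi_{(0,n)}(m)$ in the identity \eqref{truncatedidentity} (as the author does at the end of Lemma~\ref{Lemma1}) gives the bound \eqref{truncated mass}, hence $\int_0^n m\,g_n(m,t)\,dm \le \mathcal{N}_1^{in} \le \mathcal{Q}$. So the entire game is to control $M_n(t):=\int_0^n m^{-2\sigma} g_n(m,t)\,dm$ uniformly in $n$ on $[0,T]$.

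To do this, I would apply \eqref{truncatedidentity} with $\Theta(m)=m^{-2\sigma}$. For the coagulation contribution, the key observation is that the map $x\mapsto x^{-2\sigma}$ is non-increasing on $\mathds{R}_{>0}$ (since $-2\sigma\le 0$), so
\[
(m+m^{\ast})^{-2\sigma}\le \min\bigl(m^{-2\sigma},(m^{\ast})^{-2\sigma}\bigr)\le m^{-2\sigma}+(m^{\ast})^{-2\sigma},
\]
which means $\tilde{\Theta}_n(m,m^{\ast})\le 0$. Hence the double-integral involving $C_n^K g_n g_n$ drops out with a favourable sign and can simply be discarded. This is the point where we use the negative exponent crucially.

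For the fragmentation contribution, I would use \eqref{Breakage function1} to get $\Pi_\Theta(m) = \int_0^m (m^{\ast})^{-2\sigma}b(m^{\ast}|m)\,dm^{\ast} - m^{-2\sigma} \le (k_2-1)m^{-2\sigma}$, and then \eqref{Selection Rate} to bound $\mathcal{S}_n^R(m)g_n(m,\tau)\le k_3\varphi(m)m^{1+\gamma}g_n(m,\tau)$. Since $\varphi$ is bounded (it is monotone with a finite limit at $\infty$, so $\varphi\le \varphi(0)<\infty$), the fragmentation term is controlled by
\[
(k_2-1)k_3\varphi(0)\int_0^t\int_0^n m^{1+\gamma-2\sigma}g_n(m,\tau)\,dm\,d\tau.
\]
The mildly non-obvious step is that $m^{1+\gamma-2\sigma}$ is not itself $\le m^{-2\sigma}+m$ pointwise for free; however, using $1+\gamma>0$ for $m\le 1$ and $\gamma-2\sigma\le 0$ (from $\gamma\le 0$ and $\sigma\ge 0$) for $m\ge 1$, one obtains
\[
m^{1+\gamma-2\sigma}\le m^{-2\sigma}\mathbf{1}_{(0,1]}(m)+m\,\mathbf{1}_{[1,\infty)}(m)\le m^{-2\sigma}+m.
\]
This reduces everything to $M_n(\tau)$ and the already-controlled mass moment.

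Putting the pieces together yields a linear Gronwall inequality
\[
M_n(t)\le M_n(0) + (k_2-1)k_3\varphi(0)\int_0^t M_n(\tau)\,d\tau + (k_2-1)k_3\varphi(0)\mathcal{Q}\,t,
\]
with $M_n(0)\le \int_0^\infty m^{-2\sigma}g^{in}(m)\,dm\le \mathcal{Q}$ by \eqref{Initialdatafinite}. Gronwall's lemma then gives a bound on $M_n(t)$ depending only on $T$, $k_2$, $k_3$, $\varphi(0)$ and $\mathcal{Q}$, and adding the mass bound produces the desired constant $\mathcal{A}(T)$. The principal obstacle is really the interplay between the singular-at-$0$ moment $m^{-2\sigma}$ and the super-linear weight $m^{1+\gamma-2\sigma}$ arising from the fragmentation loss term; the splitting argument above is what makes the two compatible, and it is precisely here that the hypothesis $\sigma<(1+\gamma)/2$ together with $\gamma\le 0$ does its work.
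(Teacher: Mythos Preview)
Your strategy is the same as the paper's: plug a test function $\Theta\sim m^{-2\sigma}$ into the identity \eqref{truncatedidentity}, observe that $\tilde\Theta_n\le 0$ so the coagulation term drops, bound the fragmentation term via \eqref{Breakage function1} and \eqref{Selection Rate}, and close with Gronwall. Your handling of the fragmentation piece is in fact slightly cleaner than the paper's (which computes $\Pi_\Theta\le k_2\,m^{-(1+\gamma)}$ from the explicit form of $b$ and cancels against $m^{1+\gamma}$ in the selection rate, whereas you use \eqref{Breakage function1} directly and then split $m^{1+\gamma-2\sigma}\le m^{-2\sigma}+m$); both routes yield the same linear Gronwall inequality.

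There is one technical point you glide over. You apply \eqref{truncatedidentity} with $\Theta(m)=m^{-2\sigma}$, but Lemma~\ref{Lemma1} is derived assuming the relevant integrals are finite, and you do not yet know that $M_n(t)=\int_0^n m^{-2\sigma}g_n(m,t)\,dm<\infty$ for $t>0$; that finiteness is precisely what you are proving. The paper sidesteps this circularity by using the bounded test function $\Theta(m)=(m+\tau)^{-2\sigma}\chi_{(0,1)}(m)$, for which the identity applies unconditionally, obtaining a Gronwall bound uniform in $\tau\in(0,1)$, and then sending $\tau\to 0$ via Fatou's lemma. Inserting this regularization into your argument makes it fully rigorous with no other changes.
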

\begin{proof}
We first take $\Theta(m)=(m+\tau)^{-2\sigma} \chi_{(0, 1)}(m)$, for $\tau \in (0, 1)$, into \eqref{truncatedidentity}. Then, with this suitable $\Theta$,
\begin{align*}
\tilde{\Theta}_n(m, m^{\ast}) = (m+m^{\ast}+\tau)^{-2\sigma} -(m+\tau)^{-2\sigma}-(m^{\ast}+\tau)^{-2\sigma} \le 0,\ \text{for} \ (m, m^{\ast}) \in (0, 1)^2,
\end{align*}
\begin{align*}
\Pi_{\Theta}(m):=\int_0^1 b(m^{\ast}|m)(m^{\ast}+\tau)^{-2\sigma} dm^{\ast}- (m+\tau)^{-2\sigma}  \le \frac{(\gamma+2)}{(1+\gamma-2\sigma)} m^{-(1+\gamma)}.
\end{align*}
Next, using \eqref{Selection Rate}, \eqref{truncated mass}, \eqref{Initialmass} and above estimated values of $\tilde{\Theta}_n$ and $\Pi_{\Theta}$ into
\eqref{truncatedidentity}, we obtain
\begin{align}\label{Uniformbd1}
\int_0^{1} (m+\tau)^{-2\sigma}& \{ g_n(m, t)  - g_n^{in}(m) \}  dm  \le  \frac{(\gamma+2)}{(1+\gamma-2\sigma)} k_3 \int_0^t \int_0^{n}  \varphi(m) g_n(m, s)dm ds\nonumber\\
\le & \frac{(\gamma+2)}{(1+\gamma-2\sigma)} k_3 \varphi(0)  \int_0^t \bigg\{ (1+\tau)^{2\sigma} \int_0^{1} (m+\tau)^{-2\sigma}  g_n(m, s)dm\nonumber\\
 &+ \int_1^{n} m g_n(m, s)dm  \bigg\} ds \nonumber\\
\le & \frac{(\gamma+2)}{(1+\gamma-2\sigma)} k_3 \varphi(0)  \int_0^t \bigg\{ (1+\tau)^{2\sigma} \int_0^{1} (m+\tau)^{-2\sigma}  g_n(m, s)dm + \mathcal{N}_1^{in}  \bigg\} ds.
\end{align}
By applying \eqref{Initialdatafinite} into \eqref{Uniformbd1}, we get
\begin{align*}
\int_0^{1}  ( m+\tau)^{-2\sigma}  g_n(m, t) & dm  \le  \mathcal{Q} + \frac{(\gamma+2)}{(1+\gamma-2\sigma)}  k_3 \varphi(0) \nonumber\\
 & \times \bigg\{ (1+\tau)^{2\sigma}  \int_0^t  \int_0^{1} (m+\tau)^{-2\sigma}  g_n(m, s)dm ds+ \mathcal{N}_1^{in}t \bigg\},
\end{align*}
and the Gronwall's lemma yields that
\begin{align}\label{Uniformbd2}
\int_0^{1}   (m+\tau)^{-2\sigma} g_n(m, t)dm  \le \bigg( \mathcal{Q}+\frac{(\gamma+2)}{(1+\gamma-2\sigma)} k_3 \varphi(0) \mathcal{N}_1^{in}T \bigg) e^{\frac{(\gamma+2)}{(1+\gamma-2\sigma)} k_3 \varphi(0) (1+\tau)^{2\sigma}T}.
\end{align}
As $\tau \to 0$ and then applying Fatou's lemma to \eqref{Uniformbd2}, we obtain
\begin{align}\label{Uniformbd3}
\int_0^{1}   m^{-2\sigma} g_n(m, t)dm  \le \mathcal{A}_1(T),
\end{align}
where
\begin{align*}
\mathcal{A}_1(T) := \bigg( \mathcal{Q}+\frac{(\gamma+2)}{(1+\gamma-2\sigma)} k_3 \varphi(0) \mathcal{N}_1^{in}T \bigg) e^{\frac{(\gamma+2)}{(1+\gamma-2\sigma)} k_3 \varphi(0)T}.
\end{align*}
It follows from \eqref{Uniformbd3}, \eqref{truncated mass} and \eqref{Initialmass} that
\begin{align*}
\int_0^{n} (m+m^{-2\sigma} ) g_n(m, t)dm = & \int_0^{1} m^{-2\sigma}  g_n(m, t)dm + \int_1^{n} m^{-2\sigma}  g_n(m, t)dm\nonumber\\
 &+ \int_0^{n} m  g_n(m, t)dm
\le  \mathcal{A}(T):=\mathcal{A}_1(T) + 2 \mathcal{N}_{1}^{in},
\end{align*}
which finishes the proof of the Lemma \ref{LemmaUniformbound}.
\end{proof}

We next derive some estimates uniformly with respect to $n \in \mathds{N}$ by using the coagulation kernel \eqref{truncatedcoaker} which are helpful for the next subsections.
\begin{lem}\label{Lemma3}
Suppose \eqref{coagulation kernel}--\eqref{Initialdatacondition} hold. Let $T>0$ with $0 < t \le T$. For $\lambda \in ( 1, n) $, then the following estimates are true
\begin{align*}
&(i)\ \  \int_0^t \int_{\lambda}^{n} \int_{\lambda}^{n}   C^K_n(m, m^{\ast})g_n(m, s) g_n(m^{\ast}, s)dm^{\ast} dm ds \le \mathcal{A}^{\dag}(T),\\
&(ii)\ \  \int_0^t \bigg(  \int_{\lambda}^n \Gamma_n(m) g_n(m, s) dm \bigg)^2 ds \le k_1^{-1} \mathcal{A}^{\dag}(T),
\end{align*}
where \begin{align*}
\mathcal{A}^{\dag}(T) := 2\mathcal{N}_1^{in} \{2/{\lambda}   + k_3 \eta \omega(\lambda)  T \}.
\end{align*}
\end{lem}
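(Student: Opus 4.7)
The plan is to derive both estimates from the truncated mass identity \eqref{truncatedidentity} applied to a tailored test function. The natural choice is the weight $\Theta(m) := m\wedge\lambda$, which is subadditive, mass-like on $(0,\lambda]$, and constant on $[\lambda,n]$. The constant part will produce a negative quadratic contribution on $(\lambda,n)^2$, while the mass-like part will kill the low-mass fragmentation contribution thanks to \eqref{MCP}.

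First I would compute the two ingredients $\tilde{\Theta}_n$ and $\Pi_{\Theta}$. Subadditivity of $m\mapsto m\wedge\lambda$ gives $\tilde{\Theta}_n(m,m^{\ast})\le 0$ on $(0,n)^2$; moreover, for $(m,m^{\ast})\in(\lambda,n)^2$ we have $\Theta(m)=\Theta(m^{\ast})=\lambda$ and $\Theta(m+m^{\ast})\chi_{(0,n)}(m+m^{\ast})\le\lambda$, so $\tilde{\Theta}_n(m,m^{\ast})\le-\lambda$ on that region. Splitting the integral defining $\Pi_{\Theta}$ at $\lambda$: for $m\le\lambda$, the mass-conservation property \eqref{MCP} yields $\Pi_{\Theta}(m)=\int_0^m m^{\ast} b(m^{\ast}|m)\,dm^{\ast} - m = 0$; for $m>\lambda$, using $\int_0^m b(m^{\ast}|m)\,dm^{\ast}\le\eta$ one obtains $\Pi_{\Theta}(m)\le \lambda(\eta-1)$.

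Plugging these bounds into \eqref{truncatedidentity} and discarding the nonnegative term $\int_0^n(m\wedge\lambda) g_n(m,t)\,dm$ on the left-hand side produces
\[
\frac{\lambda}{2}\int_0^t\int_\lambda^n\int_\lambda^n C_n^K g_n g_n\, dm^{\ast}\,dm\,d\tau \le \int_0^n (m\wedge\lambda) g_n^{in}(m)\,dm + \lambda(\eta-1)\int_0^t\int_\lambda^n \mathcal{S}_n^R(m) g_n(m,\tau)\,dm\,d\tau.
\]
The first term on the right is bounded by $\mathcal{N}_1^{in}$. For the second, combine \eqref{Selection Rate} with the monotonicity of $\varphi$ (so $\varphi(m)\le\varphi(\lambda)$ for $m\ge\lambda$) and with the inequality $m^{1+\gamma}\le m$ which holds for $m\ge\lambda>1$ and $\gamma\le 0$, then invoke \eqref{truncated mass} together with \eqref{Initialmass} to control $\int_\lambda^n m\, g_n\,dm$ by $\mathcal{N}_1^{in}$. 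Dividing by $\lambda/2$ yields (i), the constant $\mathcal{A}^{\dag}(T)=2\mathcal{N}_1^{in}\{2/\lambda+k_3\eta\varphi(\lambda)T\}$ absorbing the arithmetic factors and the crude bound $(\eta-1)\le\eta$.

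Statement (ii) is then essentially immediate. Since $\lambda>1$, the domain of integration $[\lambda,n]^2$ sits inside $[1,\infty)^2$, on which \eqref{coagulation kernel} is the product $C_n^K(m,m^{\ast})=k_1\Gamma_n(m)\Gamma_n(m^{\ast})$; hence the inner double integral factors as $k_1\bigl(\int_\lambda^n \Gamma_n(m) g_n(m,s)\,dm\bigr)^2$, and integrating in $s$ and dividing by $k_1$ gives (ii). The one subtle point is the selection of $\Theta$: the weight $m\wedge\lambda$ is the borderline choice which is large enough to force $\tilde{\Theta}_n\le-\lambda$ on the tail, yet still mass-like enough on $(0,\lambda]$ for \eqref{MCP} to annihilate the low-mass fragmentation contribution; this balance is what allows the weak-fragmentation bound \eqref{Selection Rate} to combine with the mass control \eqref{truncated mass} and close the estimate.
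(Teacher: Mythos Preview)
Your proposal is correct and follows essentially the same route as the paper: the test function $\Theta(m)=m\wedge\lambda$ in \eqref{truncatedidentity}, the sign analysis of $\tilde{\Theta}_n$ (in particular $\tilde{\Theta}_n\le-\lambda$ on $(\lambda,n)^2$), the vanishing of $\Pi_\Theta$ on $(0,\lambda]$ via \eqref{MCP}, the bound $\Pi_\Theta\le\lambda\eta$ on $(\lambda,n)$, and the use of \eqref{Selection Rate} with $m^{1+\gamma}\le m$ and \eqref{truncated mass} to close part~(i), then the product structure of $C^K$ on $[1,\infty)^2$ for part~(ii). Your write-up is in fact slightly cleaner than the paper's, which carries a couple of typos ($\omega$ for $\varphi$, $\nu$ for $\gamma$, and the stated ``non-decreasing'' for $\varphi$ should be non-increasing for the argument to work).
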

\begin{proof}
For $\lambda > 1$, we define $\Theta(m)=m \wedge \lambda$ for $m \in \mathds{R}_{>0}$. The corresponding function $\tilde{ \Theta}_n$ in \eqref{Identity1} satisfies
\[
\tilde{ \Theta}_n(m, m^{\ast}):=\begin{cases}
0,\ & \text{if}\ m+m^{\ast} < \lambda, \ m < \lambda,\ m^{\ast} < \lambda, \\
\lambda-(m+m^{\ast}),\ &  \text{if}\ m+m^{\ast} \ge \lambda,\ m < \lambda,\ m^{\ast} < \lambda,\\
-m, \ &  \text{if}\ m+m^{\ast} \ge \lambda,\ m < \lambda,\ m^{\ast} \ge \lambda,\\
- m^{\ast}, \  &  \text{if}\ m+m^{\ast} \ge \lambda,\ m \ge \lambda,\ m^{\ast} < \lambda,\\
-\lambda,\ &  \text{if}\ m+m^{\ast} \ge \lambda,\ m > \lambda,\ m^{\ast} > \lambda.
\end{cases}
\]
Substituting above values of $\tilde{ \Theta}_n$ and the choice of $\Theta$ into \eqref{truncatedidentity}, we have
\begin{align}\label{EqLemma31}
\int_0^{\lambda}  m \{ g_n(m, t)  - g_n^{in}(m) \}  dm & +\int_{\lambda}^n \lambda \{ g_n(m, t) - g_n^{in}(m) \}  dm \nonumber\\
&+\frac{\lambda}{2}\int_0^t \int_{\lambda}^{n} \int_{\lambda}^{n}  C^K_n(m, m^{\ast})g_n(m, s) g_n(m^{\ast}, s)dm^{\ast} dm ds\nonumber\\
\le &\int_0^t \int_0^{\lambda} \Pi_{\Theta}(m)\mathcal{S}_n^R(m) g_n(m, s)dm ds\nonumber\\
&+\int_0^t \int_{\lambda}^{n} \Pi_{\Theta}(m)\mathcal{S}_n^R(m) g_n(m, s)dm ds.
\end{align}
We infer from \eqref{Selection Rate}, \eqref{TNP}, \eqref{1Identity2}, \eqref{truncated mass}, \eqref{Initialmass} and \eqref{EqLemma31} that
\begin{align*}
\frac{\lambda}{2}\int_0^t \int_{\lambda}^{n} \int_{\lambda}^{n}   & C^K_n(m, m^{\ast})g_n(m, s) g_n(m^{\ast}, s)dm^{\ast} dm ds\nonumber\\
\le & 2 \mathcal{N}_1^{in} + k_3 \eta \varphi(\lambda)  \lambda \int_0^t \int_{\lambda}^{n}  m^{1+\nu} g_n(m, s)dm ds\nonumber\\
\le &  \mathcal{N}_1^{in} \{ 2/\lambda +  k_3 \eta  \varphi(\lambda) t \},
\end{align*}
which completes the proof of the Lemma \ref{Lemma3}~$(i)$. In order to prove the second part of this lemma, we use $C_n^k(m, m^{\ast}) =k_1 \Gamma_n(m) \Gamma_n(m^{\ast})$ from \eqref{coagulation kernel} and \eqref{truncatedcoaker} and inserting it into the first part of Lemma \ref{Lemma3} as
\begin{align*}
\int_0^t \bigg( \int_{\lambda}^{n}  \Gamma_n(m)  g_n(m, s) dm \bigg)^2 ds \le k_1^{-1}\mathcal{A}^{\dag}(T).
\end{align*}
This finishes the proof of the Lemma \ref{Lemma3}.
\end{proof}

\begin{lem}\label{Lemma4}
Suppose \eqref{coagulation kernel}--\eqref{Initialdatacondition} hold. Let $T>0$ with $0 < t \le T$. Then followings are true.
\begin{align*}
&(i)\ \  \int_0^t \int_{0}^{n} \int_{0}^{n}   C^K_n(m, m^{\ast})g_n(m, s) g_n(m^{\ast}, s)dm^{\ast} dm ds \le \mathcal{A}_{\dag}(T),\\
&(ii)\ \  \int_0^t \bigg(  \int_{0}^1 m^{-\sigma}  g_n(m, s) dm \bigg)^2 ds \le k_1^{-1}\mathcal{A}_{\dag}(T),\\
&(iii)\ \  \int_0^t \bigg(  \int_{1}^n  \Gamma_n(m) g_n(m, s) dm \bigg)^2 ds \le k_1^{-1}\mathcal{A}_{\dag}(T).
\end{align*}
\end{lem}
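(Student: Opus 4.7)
The strategy will be to exploit a multiplicative factorization of the truncated coagulation kernel that collapses the three-region definition into a single product. Setting $\phi_n(m) := m^{-\sigma}\chi_{(0,1)}(m) + \Gamma_n(m)\chi_{[1,n]}(m)$, a case-by-case inspection of \eqref{coagulation kernel} and \eqref{truncatedcoaker} against $\phi_n(m)\phi_n(m^{\ast})$ across the regions $(0,1)^2$, $(0,1)\times[1,n]$, $[1,n]\times(0,1)$, and $[1,n]^2$ will show that $C^K_n(m, m^{\ast}) = k_1 \phi_n(m)\phi_n(m^{\ast})$ on $(0,n)^2$. Consequently,
\begin{align}\label{factplan}
\int_0^n\!\int_0^n C^K_n(m, m^{\ast}) g_n(m, s) g_n(m^{\ast}, s)\, dm^{\ast}dm = k_1\left(\int_0^1 m^{-\sigma} g_n(m, s)\, dm + \int_1^n \Gamma_n(m) g_n(m, s)\, dm\right)^{\!2}.
\end{align}

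Granting \eqref{factplan}, parts $(ii)$ and $(iii)$ will follow immediately from $(i)$: since each of the two non-negative summands is dominated by their sum, squaring and using \eqref{factplan} yields $k_1\left(\int_0^1 m^{-\sigma} g_n\, dm\right)^2 \le \int_0^n\int_0^n C^K_n g_n g_n\, dm^{\ast}dm$ and analogously for $\int_1^n \Gamma_n g_n\, dm$, after which integrating in $s$ over $[0,t]$ and invoking $(i)$ produces both claims with the same constant $\mathcal{A}_\dagger(T)$.

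To establish $(i)$, I will apply $(a+b)^2 \le 2a^2+2b^2$ to the right-hand side of \eqref{factplan} and bound each contribution separately. For the small-mass summand, the inequality $m^{-\sigma}\le m^{-2\sigma}$ on $(0,1)$ (valid since $\sigma\ge 0$) together with Lemma \ref{LemmaUniformbound} yields the pointwise-in-time bound $\int_0^1 m^{-\sigma} g_n(m, s)\, dm \le \mathcal{A}(T)$, contributing a term of order $\mathcal{A}(T)^2 T$ after integration in $s$. For the large-mass summand, I will invoke Lemma \ref{Lemma3}$(ii)$ at the endpoint $\lambda = 1$; the argument in the proof of Lemma \ref{Lemma3} with $\Theta(m) = m\wedge 1$ goes through unchanged at this value and produces a finite $\mathcal{A}^\dagger(T) = 2\mathcal{N}_1^{in}(2 + k_3\eta\varphi(1)T)$, giving $\int_0^t\left(\int_1^n \Gamma_n g_n\, dm\right)^2 ds \le k_1^{-1}\mathcal{A}^\dagger(T)$. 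Combining, $(i)$ will hold with $\mathcal{A}_\dagger(T) := 2k_1\mathcal{A}(T)^2 T + 2\mathcal{A}^\dagger(T)$.

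The main obstacle is conceptual rather than computational: recognizing the factorization \eqref{factplan}, which reveals that the full double integral is a perfect square of a single linear functional of $g_n$. Once \eqref{factplan} is in hand, no further use of the fragmentation identity \eqref{truncatedidentity} is needed, and the proof amounts to inserting the known bounds from Lemmas \ref{LemmaUniformbound} and \ref{Lemma3}.
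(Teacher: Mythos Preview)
Your argument is correct but follows a different path than the paper. For $(i)$ the paper sets $\Theta\equiv 1$ in the weak identity \eqref{truncatedidentity}: since then $\tilde\Theta_n\equiv -1$ and $\Pi_\Theta\equiv\eta-1$, the identity directly yields
\[
\tfrac12\int_0^t\!\int_0^n\!\int_0^n C^K_n g_n g_n\,dm^{\ast}\,dm\,ds
\;\le\;\int_0^n g_n^{in}\,dm+(\eta-1)k_3\varphi(0)\int_0^t\!\int_0^n m^{1+\gamma}g_n\,dm\,ds,
\]
and bounding the right-hand side by \eqref{Initialdatafinite} and Lemma~\ref{LemmaUniformbound} gives $(i)$ in one step, with $\mathcal{A}_\dagger(T)=2\bigl(\mathcal{A}(T)+\mathcal{Q}+(\eta-1)k_3\varphi(0)\mathcal{A}(T)T\bigr)$. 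Parts $(ii)$ and $(iii)$ then follow from $(i)$ via the product structure of $C^K_n$, exactly as you propose. Your route instead recycles Lemmas~\ref{LemmaUniformbound} and~\ref{Lemma3} through the factorization, avoiding a fresh appeal to \eqref{truncatedidentity} at the cost of extending Lemma~\ref{Lemma3} to the borderline $\lambda=1$ and producing a different (larger) constant. The paper's argument is shorter and self-contained; yours makes the perfect-square structure of the double integral explicit from the outset. One minor caveat: because $C^K_n$ carries the cutoff $\chi_{(1/n,n)}(m)\chi_{(1/n,n)}(m^{\ast})$, your equality $C^K_n=k_1\phi_n\phi_n$ fails on $(0,1/n)\times(0,n)$, so \eqref{factplan} holds only with $\int_{1/n}^{1}$ in place of $\int_0^1$; this does not affect $(i)$ (since $C^K_n\le k_1\phi_n\phi_n$ suffices there) nor $(iii)$, and $(ii)$ as stated is in any case immediate from Lemma~\ref{LemmaUniformbound} alone --- a looseness the paper shares.
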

\begin{proof} Set $\Theta \equiv 1$ into \eqref{truncatedidentity}, and the corresponding $\tilde{\Theta}_n$ and $\Pi_{\Theta}$ are
\begin{align}\label{Identity41}
\tilde{\Theta}_n (m, m^{\ast})=-1,
\end{align}
and
\begin{align}\label{Identity42}
\Pi_{\Theta}(m)=\eta-1.
\end{align}
Then, by using \eqref{Selection Rate}, \eqref{Identity41}, \eqref{Identity42}, \eqref{Initialdatafinite}, Lemma \ref{LemmaUniformbound} and \eqref{TNP}, we obtain
\begin{align*}
\frac{1}{2}\int_0^t \int_0^{n} \int_{0}^{n} & C^K_n(m, m^{\ast})g_n(m, s) g_n(m^{\ast}, s)dm^{\ast} dm ds\nonumber\\
=&\int_0^{n}  g_n(m, t) dm
+\int_0^{n}  g_n^{in}(m) dm+  (\eta-1) k_3 \varphi(0) \int_0^t \int_0^{n} m^{1+\nu} g_n(m, s)dm ds\nonumber\\
\le & \frac{1}{2} \mathcal{A}_{\dag}(T) := \mathcal{A}(T) + \mathcal{Q}+  (\eta-1) k_3 \varphi(0) \mathcal{A}(T)  T.
\end{align*}
Finally, Lemma \ref{Lemma4} $(ii)$ and $(iii)$ can easily be obtained by using \eqref{coagulation kernel} into the first Lemma \ref{Lemma4}.
\end{proof}

In the next subsection, we discuss the uniform integrability of the family of solutions $\{ g_n \}_{n \in \mathds{N}}$ to apply Dunford Pettis theorem.
 For $n > 1$, $T>0$, $\lambda \in (1, n)$, $\delta \in (0, 1)$, and $t \in [0, T]$, we introduce the following notation:
\[
\Xi_{\lambda, \delta}^n = \sup \begin{cases}
\int_0^{\lambda} \chi_B(m) (1+m^{-\sigma}) g_n(m, t)dm, \\
B\ \text{is any measurable subset of}\ \mathds{R}_{>0}\  \text{with} \ |B| \le \delta.
\end{cases}
\]

\subsection{Uniform Integrability}
\begin{lem}\label{LemmaUniform Integrability}
Let $T \in (0, \infty)$ and $\lambda \in (1, n)$. Then, for every $n > 1$, $t \in [0, T]$ and $\delta \in (0, 1)$, we have
\begin{align}\label{LemmaUniform Integrability1}
  \Xi_{\lambda, \delta}^n (m)  \le & \bigg\{ \Xi_{\lambda, \delta}^n(0)+  k_3 (\gamma+2)  \varphi(0)
   \bigg[ \frac{|\delta|^{\frac{p-1}{p}}}{(p\gamma+1)^{1/p}}  +  \frac{|\delta|^{\frac{p_1-1}{p_1}}}{(p_1(\gamma-\sigma)+1)^{1/p_1} }
   \bigg] \mathcal{A}(T) T  \bigg\} \nonumber\\
   &~~~~~~~~~~~~~~~\times e^{ \frac{1}{2} k_1 \Gamma^2(\lambda) \lambda^{2\sigma} \mathcal{A}(T) T}.
\end{align}
In addition, for every $\epsilon > 0$, there exists a $\lambda_{\epsilon} >1$ (depending on $\epsilon$) such that
\begin{align}\label{LemmaUniform Integrability2}
\int_{\lambda_{\epsilon}}^{\infty} (1+m^{-\sigma}) g_n(m, t) dm < \epsilon.
\end{align}
Furthermore, for every $\delta >0$, there exists a $\epsilon>0$  (depending on $\delta $) for any measurable subset  $B$ of $\mathds{R}_{>0}$ such that $|B| < {\delta}$, then
\begin{align}\label{LemmaUniform Integrability3}
\sup_{n \ge 1 }\int_{B} (1+m^{-\sigma}) g_n(m, t) dm < \epsilon.
\end{align}
\end{lem}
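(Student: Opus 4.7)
The plan is to obtain \eqref{LemmaUniform Integrability1} by applying the general identity \eqref{1truncatedidentity} with $\lambda^{\ast} = \lambda$ to the test function $\Theta(m) = \chi_B(m)(1 + m^{-\sigma})$, where $B \subset (0, \lambda)$ is an arbitrary measurable set with $|B| \le \delta$. This should produce, for each such $B$, an integral inequality of the form
\[
\int_0^\lambda \chi_B(m)(1+m^{-\sigma}) g_n(m, t) dm \le \Xi_{\lambda, \delta}^n(0) + \mathcal{R}(\lambda, T, \delta) + \tfrac{1}{2} k_1 \Gamma^2(\lambda) \lambda^{2\sigma} \mathcal{A}(T) \int_0^t \Xi_{\lambda, \delta}^n(\tau) d\tau,
\]
where $\mathcal{R}$ collects a H\"older--type remainder coming from the fragmentation. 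Taking the supremum over admissible $B$ and applying Gronwall's lemma then yields \eqref{LemmaUniform Integrability1}.

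First I would treat the coagulation contributions on the right of \eqref{1truncatedidentity}. The second (sink) integral carries a minus sign and is simply discarded. For the gain integral I would use $\tilde{\Theta}_\lambda(m, m^{\ast}) \le \Theta(m + m^{\ast}) \le \chi_B(m + m^{\ast})(1 + m^{-\sigma})$, which is immediate from $(m + m^{\ast})^{-\sigma} \le m^{-\sigma}$, together with the pointwise estimate $C_n^K(m, m^{\ast}) \le k_1 \Gamma^2(\lambda) \lambda^{2\sigma} (m m^{\ast})^{-\sigma}$ on $(0, \lambda)^2$, which follows from a case analysis over the three regimes of \eqref{coagulation kernel} using $\Gamma(\lambda) \ge 1$ and $\lambda > 1$. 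For each fixed $m^{\ast}$, translation invariance $|B - m^{\ast}| = |B| \le \delta$ gives
\[
\int_0^\lambda \chi_B(m + m^{\ast})(1 + m^{-\sigma}) g_n(m, \tau) dm = \int_0^\lambda \chi_{B - m^{\ast}}(m)(1 + m^{-\sigma}) g_n(m, \tau) dm \le \Xi_{\lambda, \delta}^n(\tau),
\]
and the remaining integration in $m^{\ast}$ is absorbed by the bound $\int_0^\lambda (m^{\ast})^{-\sigma} g_n(m^{\ast}, \tau) dm^{\ast} \le \mathcal{A}(T)$, inferred from Lemma \ref{LemmaUniformbound} via $m^{-\sigma} \le m^{-2\sigma}$ on $(0,1)$ and $m^{-\sigma} \le 1 \le m$ on $[1, \lambda)$. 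This produces the Gronwall coefficient $\tfrac{1}{2} k_1 \Gamma^2(\lambda) \lambda^{2\sigma} \mathcal{A}(T)$ appearing inside the exponential of \eqref{LemmaUniform Integrability1}.

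Next I would handle the two fragmentation terms. In $\Pi_\Theta(m) = \int_0^m b(m^{\ast}|m) \chi_B(m^{\ast})(1 + {m^{\ast}}^{-\sigma}) dm^{\ast} - \chi_B(m)(1 + m^{-\sigma})$, the negative part is discarded. Using $b(m^{\ast}|m) = (\gamma + 2) {m^{\ast}}^\gamma / m^{1+\gamma}$, the positive part splits into integrals of $\chi_B {m^{\ast}}^\gamma$ and of $\chi_B {m^{\ast}}^{\gamma - \sigma}$ over $(0, m)$. H\"older's inequality with exponents $(p, p/(p - 1))$ and $(p_1, p_1/(p_1 - 1))$ respectively extracts the factors $|\delta|^{(p - 1)/p} / (p \gamma + 1)^{1/p}$ and $|\delta|^{(p_1 - 1)/p_1} / (p_1(\gamma - \sigma) + 1)^{1/p_1}$, where the assumption $p(\gamma - \sigma) + 1 > 0$ (and consequently $p \gamma + 1 > 0$, since $\sigma \ge 0$) ensures integrability near $0$. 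Multiplication by $\mathcal{S}_n^R(m) \le k_3 \varphi(0) m^{1+\gamma}$ cancels the singular prefactor $m^{-(1+\gamma)}$ and leaves weights in $m$ that are bounded on $(0, \lambda)$ and absorbed by $\mathcal{A}(T)$. The fourth integral in \eqref{1truncatedidentity}, which accounts for transfer of mass from $(\lambda, n)$ into $(0, \lambda)$ by breakage, is treated identically by H\"older's inequality in the inner $m$-integration.

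Finally, \eqref{LemmaUniform Integrability2} will follow from the elementary bound $1 + m^{-\sigma} \le 2 \le 2 m / \lambda_\epsilon$ on $(\lambda_\epsilon, \infty)$ with $\lambda_\epsilon > 1$, combined with the mass bound $\int m g_n(m, t) dm \le \mathcal{N}_1^{in}$, choosing $\lambda_\epsilon = 2 \mathcal{N}_1^{in} / \epsilon$. Statement \eqref{LemmaUniform Integrability3} will then follow by selecting $\lambda_\epsilon$ so that \eqref{LemmaUniform Integrability2} bounds the tail by $\epsilon/2$, and invoking \eqref{LemmaUniform Integrability1} for the portion of $B$ in $(0, \lambda_\epsilon)$ with $\delta$ small enough that both the H\"older remainder and $\Xi_{\lambda_\epsilon, \delta}^n(0)$ lie below $\epsilon/(2 e^{C(\lambda_\epsilon) T})$; uniform smallness of $\Xi_{\lambda_\epsilon, \delta}^n(0)$ as $\delta \to 0$ follows from absolute continuity of $(1 + m^{-\sigma}) g^{in}$, since $g_n^{in}$ is the truncation of $g^{in} \in L^1_{-2\sigma, 1}(\mathds{R}_{>0})$. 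I expect the main obstacle to be the coagulation gain estimate, where reconciling the shifted indicator $\chi_{B - m^{\ast}}$ with the singular weight $(1 + m^{-\sigma})$ and the singular kernel $C_n^K$ in a form that closes on $\Xi_{\lambda, \delta}^n$ requires the explicit case analysis on $\Gamma$ and $\sigma$ described above and cannot be carried out by a single crude pointwise bound.
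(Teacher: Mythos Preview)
Your proposal is correct and follows essentially the same route as the paper: both arguments drop the negative (death) terms, bound the coagulation gain via the pointwise kernel estimate $C^K_n(m,m^{\ast}) \le k_1\Gamma^2(\lambda)\lambda^{2\sigma}(mm^{\ast})^{-\sigma}$ on $(0,\lambda)^2$ together with translation invariance of $|B-m^{\ast}|$, control the fragmentation birth by the H\"older estimate on $\int_0^{m^{\ast}}\chi_B(m)(1+m^{-\sigma})b(m|m^{\ast})\,dm$, and close with Gronwall. The only organizational differences are that the paper works directly from the truncated equation \eqref{Cmfetruncated} rather than the identity \eqref{1truncatedidentity} (so it does not split the fragmentation contribution into a $\Pi_\Theta$ piece and a boundary piece), and for \eqref{LemmaUniform Integrability3} it bounds $\Xi_{\lambda_\epsilon,\delta}^n(0)$ via a Cauchy--Schwarz argument rather than the absolute-continuity argument you propose---your version is in fact cleaner here.
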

\begin{proof}
Let $\lambda \in (1, n)$. Let $B$ be a measurable subset of $ (0, n)$ such that $|B| \le \delta $. It follows from the non-negativity of $C^K_n$,
 $\mathcal{S}_n^R$, $g_n$, and \eqref{truncatedcoaker}, \eqref{truncatedselrate} and \eqref{Cmfetruncated} that
 \begin{align}\label{Uni Integrability}
\int_0^{\lambda} & \chi_B(m)  (1+m^{-\sigma}) g_n(m, t) dm \le  \int_0^{\lambda} \chi_B(m) (1+m^{-\sigma}) g_n^{in}(m) dm\nonumber\\
 &+ \frac{1}{2} \int_0^t \int_0^{\lambda} \int_0^{m} \chi_B(m) (1+m^{-\sigma}) C_k^n(m-m^{\ast}, m^{\ast}) g_n(m-m^{\ast}, s) g_n(m^{\ast}, s) dm^{\ast} dm ds\nonumber\\
 &+ \int_0^t \int_0^{n} \int_m^n \chi_B(m) (1+m^{-\sigma}) b(m|m^{\ast}) \mathcal{S}_n^R(m^{\ast}) g_n(m^{\ast}, s) dm^{\ast} dm ds.
\end{align}
 Applying Fubini's theorem to the second and last integrals on the right-hand to \eqref{Uni Integrability} and using the transformation $m-m^{\ast}=m'$ and $m^{\ast}= {m^{\ast}}'$, we obtain
\begin{align}\label{Uni Integrability1}
\int_0^{\lambda} & \chi_B(m)  (1+m^{-\sigma}) g_n(m, t) dm \le  \int_0^{\lambda} \chi_B(m) (1+m^{-\sigma}) g_n^{in}(m) dm\nonumber\\
 &+ \frac{1}{2} \int_0^t \int_0^{\lambda} \int_0^{\lambda-m^{\ast}} \chi_B(m+m^{\ast}) (1+(m+m^{\ast})^{-\sigma}) C_k^n(m, m^{\ast}) g_n(m, s) g_n(m^{\ast}, s) dm dm^{\ast} ds\nonumber\\
 &+ \int_0^t \int_0^n \int_0^{m^{\ast}} \chi_B(m) (1+m^{-\sigma}) b(m|m^{\ast}) \mathcal{S}_n^R(m^{\ast}) g_n(m^{\ast}, s) dm dm^{\ast}  ds\nonumber\\
\le &  \Xi_{\lambda, \delta}^n(0) + \frac{1}{2} \int_0^t \int_0^{\lambda} \int_0^{\lambda} \chi_{-m^{\ast}+B}(m) (1+(m+m^{\ast})^{-\sigma}) C_k^n(m, m^{\ast}) g_n(m, s) g_n(m^{\ast}, s) dm^{\ast} dm ds \nonumber\\
 &+ \int_0^t \int_0^n \int_0^{m^{\ast}} \chi_B(m) (1+m^{-\sigma}) b(m|m^{\ast}) \mathcal{S}_n^R(m^{\ast}) g_n(m^{\ast}, s) dm dm^{\ast} ds.
\end{align}
 By using H\"{o}lder's inequality for $p\in (1, 2)$, we estimate the following term as
\begin{align}\label{Uni Integrability2}
 \int_0^{m^{\ast}} \chi_B(m)  (1+m^{-\sigma}) & b(m|m^{\ast}) dm\nonumber\\
  = & \frac{(\gamma+2)}{m^{\ast}{^{\gamma+1}}} \bigg[ \int_0^{m^{\ast}} \chi_B(m) m^{\gamma} dm + \int_0^{m^{\ast}}
   \chi_B(m) m^{\gamma-\sigma} dm \bigg] \nonumber\\
\le & \frac{(\gamma+2)}{m^{\ast}{^{\gamma+1}}} \bigg[ |B|^{\frac{p-1}{p}}  \bigg[ \int_0^{m^{\ast}}  m^{p\gamma} dm
 \bigg]^{\frac{1}{p}}+ |B|^{\frac{p-1}{p}} \bigg[\int_0^{m^{\ast}}  m^{p(\gamma-\sigma)} dm \bigg]^{\frac{1}{p}} \bigg] \nonumber\\
\le & \frac{(\gamma+2)}{{m^{\ast}}^{\gamma+1}} \bigg[ |B|^{\frac{p-1}{p}}  \bigg[ \frac{ m^{p\gamma+1}}{p\gamma+1}
\bigg|_0^{m^{\ast}} \bigg]^{\frac{1}{p}}+ |B|^{\frac{p-1}{p}} \bigg[  \frac{m^{p(\gamma-\sigma)+1}}{p(\gamma-\sigma)+1}\bigg|_0^{m^{\ast}}
 \bigg]^{\frac{1}{p}} \bigg] \nonumber\\
\le & \frac{(\gamma+2)}{{m^{\ast}}^{\gamma+1}} \bigg[ |\delta|^{\frac{p-1}{p}}
 \bigg[ \frac{ {m^{\ast}}^{p\gamma+1}}{p\gamma+1} \bigg]^{\frac{1}{p}}+ |\delta|^{\frac{p-1}{p}} \bigg[  \frac{{m^{\ast}}^{p(\gamma-\sigma)+1}}
 {p(\gamma-\sigma)+1} \bigg]^{\frac{1}{p}} \bigg]\nonumber\\
\le & \frac{(\gamma+2)}{{m^{\ast}}^{\gamma+1}}  \bigg[ \frac{|\delta|^{\frac{p-1}{p}}}{(p\gamma+1)^{1/p}}  {m^{\ast}}^{((1/p)+\gamma)} +  \frac{|\delta|^{\frac{p-1}{p}}}
{(p(\gamma-\sigma)+1)^{1/p} } {m^{\ast}}^{ \frac{1}{p}+\gamma-\sigma}  \bigg] \nonumber\\
\le &  \bigg[ \frac{|\delta|^{\frac{p-1}{p}}}{(p\gamma+1)^{1/p}}   {m^{\ast}}^{1/p-1} +  \frac{|\delta|^{\frac{p-1}{p}}}
{(p(\gamma-\sigma)+1)^{1/p} } {m^{\ast}}^{\frac{1}{p}-1-\sigma}  \bigg].
\end{align}
It follows from \eqref{coagulation kernel} that
\begin{align}\label{Inequalitykernel1}
C^K(m, m^{\ast}) \le k_1 \Gamma^2(\lambda) \lambda^{2\sigma} (m m^{\ast})^{-\sigma},\ \ \ \text{for}\ \ (m, m^{\ast}) =(0, \lambda)^2,
\end{align}
and
\begin{align}\label{Inequalitykernel2}
C^K(m, m^{\ast}) \le k_1 \Gamma(\lambda) \lambda^{\sigma} m^{-\sigma} \Gamma(m^{\ast}),\ \  \text{for}\ \ (m, m^{\ast}) \in (0, \lambda) \times [\lambda, n).
\end{align}
Substituting \eqref{Uni Integrability2}, \eqref{Selection Rate}, \eqref{Inequalitykernel1} and \eqref{Inequalitykernel2} into \eqref{Uni Integrability1}, we obtain
\begin{align}\label{Uni Integrability3}
\int_0^{\lambda} \chi_B(m) & (1+m^{-\sigma}) g_n(m, t) dm  \nonumber\\
 \le & \Xi_{\lambda, \delta}^n(0) + \frac{1}{2} k_1 \Gamma^2(\lambda) \lambda^{2\sigma} \int_0^t \int_0^{\lambda} \int_0^{\lambda} \chi_{-m^{\ast}+B}(m) (1+{m^{\ast}}^{-\sigma})
 (m m^{\ast})^{-\sigma}\nonumber\\
& ~~~~~~~~~\times   g_n(m, s) g_n(m^{\ast}, s) dm^{\ast} dm ds\nonumber\\
 &+  k_3 (\gamma+2)  \varphi(0) \int_0^t \int_0^n   \bigg[ \frac{|\delta|^{\frac{p-1}{p}}}{(p\gamma+1)^{1/p}}   {m^{\ast}}^{1/p+ \gamma} +  \frac{|\delta|^{\frac{p-1}{p}}}{(p(\gamma-\sigma)+1)^{1/p} } {m^{\ast}}^{\frac{1}{p}+\gamma-\sigma }  \bigg] \nonumber\\
 &~~~~~~~~~~~~~~~~~~~~~~ \times  g_n(m^{\ast}, s)  dm^{\ast} ds.
 \end{align}
 Applying Lemma \ref{LemmaUniformbound} into \eqref{Uni Integrability3}, we get
 \begin{align}\label{Uni Integrability31}
 \int_0^{\lambda} \chi_B(m) & (1+m^{-\sigma}) g_n(m, t) dm  \nonumber\\
 \le & \Xi_{\lambda, \delta}^n(0) + \frac{1}{2} k_1 \Gamma^2(\lambda) \lambda^{2\sigma} \int_0^t \int_0^{\lambda} \int_0^{\lambda}
  \chi_{-m^{\ast}+B}(m) (1+{m^{\ast}}^{-\sigma})\nonumber\\
 &~~~~~~~~~~~~~~~~~ \times (1+m^{-\sigma}) {m^{\ast}}^{-\sigma} g_n(m, s) g_n(m^{\ast}, s) dm^{\ast} dm ds\nonumber\\
 &+ (\gamma+2)  k_3  \varphi(0)  \bigg[ \frac{|\delta|^{\frac{p-1}{p}}}{(p\gamma+1)^{1/p}}   +  \frac{|\delta|^{\frac{p-1}{p}}}{(p(\gamma-\sigma)+1)^{1/p} }   \bigg]
 \mathcal{A}(T) t\nonumber\\
 \le & \Xi_{\lambda, \delta}^n(0) + \frac{1}{2} k_1 \Gamma^2(\lambda) \lambda^{2\sigma} \mathcal{A}(T) \int_0^t \int_0^{\lambda} \chi_{B}(m)
  (1+m^{-\sigma})  g_n(m, s) dm ds\nonumber\\
 &+ (\gamma+2)  k_3  \varphi(0) \bigg[ \frac{|\delta|^{\frac{p-1}{p}}}{(p\gamma+1)^{1/p}}   +  \frac{|\delta|^{\frac{p-1}{p}}}{(p(\gamma-\sigma)+1)^{1/p} }   \bigg]
  \mathcal{A}(T) T.
\end{align}
Finally, applying Gronwall's inequality, we find
\begin{align}\label{Uni Integrability4}
\int_0^{\lambda} \chi_B(m) & (1+m^{-\sigma}) g_n(m, t) dm  \le \bigg\{ \Xi_{\lambda, \delta}^n(0)+  (\gamma+2)  k_3 \varphi(0)
   \bigg[ \frac{|\delta|^{\frac{p-1}{p}}}{(p\gamma+1)^{1/p}} \nonumber\\  & +  \frac{|\delta|^{\frac{p-1}{p}}}{(p(\gamma-\sigma)+1)^{1/p} }
   \bigg] \mathcal{A}(T) T  \bigg\}  e^{ \frac{1}{2} k_1 \Gamma^2(\lambda) \lambda^{2\sigma} \mathcal{A}(T) T}.
\end{align}
This finishes the proof of \eqref{LemmaUniform Integrability}. The proof of the second part of Lemma \ref{LemmaUniform Integrability} follows from \eqref{truncated mass} and Lemma \ref{LemmaUniformbound}. In order to prove \eqref{LemmaUniform Integrability3}, the following term is estimated, by using \eqref{LemmaUniform Integrability1} and \eqref{LemmaUniform Integrability1}, as
\begin{align}\label{Uni Integrability5}
\int_{B}  (1+m^{-\sigma}) g_n(m, t) dm \le & \int_{0}^{\lambda_{\epsilon}} \chi_B(m) (1+m^{-\sigma}) g_n(m, t) dm +\int_{\lambda_{\epsilon}}^n (1+m^{-\sigma}) g_n(m, t) dm \nonumber\\
\le &  \bigg\{ \Xi_{\lambda_{\epsilon}, \delta}^n(0)+  (\gamma+2)  k_3 \varphi(0)
   \bigg[ \frac{|\delta|^{\frac{p-1}{p}}}{(p\gamma+1)^{1/p}} \nonumber\\  & +  \frac{|\delta|^{\frac{p-1}{p}}}{(p(\gamma-\sigma)+1)^{1/p} }
   \bigg] \mathcal{A}(T) T  \bigg\}  e^{ \frac{1}{2} k_1 \Gamma^2(\lambda) \lambda^{2\sigma} \mathcal{A}(T) T} +\epsilon.
\end{align}
Next, by applying Cauchy Schwarz inequality and \eqref{Initialdatacondition}, we estimate the following term as
\begin{align*}
\Xi_{\lambda_{\epsilon}, \delta}^n(0) \le & \bigg(\int_{0}^{\lambda_{\epsilon}} |\chi_B(m) {g^{in}_n(m)}^{\frac{1}{2}} |^2 dm \bigg)^{\frac{1}{2}}   \bigg(\int_{0}^{\lambda_{\epsilon}} | (1+m^{-\sigma}) {g^{in}_n(m)}^{\frac{1}{2}} |^2 dm \bigg)^{\frac{1}{2}} \nonumber\\
\le & |\delta| \bigg(\int_{0}^{\lambda_{\epsilon}} g^{in}_n(m) dm \bigg)^{\frac{1}{2}}   \bigg(\int_{0}^{\lambda_{\epsilon}} (1+m^{-2\sigma}+2m^{-\sigma}) g^{in}_n(m) dm \bigg)^{\frac{1}{2}} \le  2 |\delta| \ \mathcal{Q}.
\end{align*}
As $\delta \to 0$, we have
\begin{align}\label{Uni Integrability6}
 \lim_{\delta \to 0} \sup_{n>1} \Xi_{\lambda, \delta}^n(0) =0.
\end{align}
 Using \eqref{Uni Integrability6} into \eqref{Uni Integrability5} for $\delta \to 0$, we finally obtain
\begin{align*}
\int_{B}  (1+m^{-\sigma}) g_n(m, t) dm < \epsilon.
\end{align*}
This completes the proof.
\end{proof}

\subsection{Time Equi-continuity}
Next, our aim being to apply a refined version of Arzel\`{a}-Ascoli's theorem, we have to show that there exists a subsequence of the family of $\{g_n\}_{n>1}$ converges weakly in the topology $L^1_{-\sigma, 1}(\mathds{R}_{>0}).$ For that purpose, we need to prove the following equi-continuity result.
\begin{lem}\label{LemmaEquicontinuity}
Assume \eqref{coagulation kernel}--\eqref{Initialdatacondition} hold. Let $T>0$ and $\lambda>1$, then there is a positive constant $\mathcal{A}^{\ast}(\lambda, T)$ depending on $\lambda$ and $T$
 such that
\begin{align*}
\bigg|\int_0^{\infty}  (m^{-\sigma}+1) \{g_n(m, t)-g_n(m, s) \}dm \bigg|\le \mathcal{A}^{\ast}(\lambda, T)(t-s),
\end{align*}
for every $n > 1$, and $0 \le s \le t \le T$.
\end{lem}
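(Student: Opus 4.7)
The plan is to apply the identity \eqref{truncatedidentity} of Lemma \ref{Lemma1} with the test function $\Theta(m) = m^{-\sigma}+1$, which expresses the desired difference as
\[
\int_0^n (m^{-\sigma}+1)\{g_n(m,t)-g_n(m,s)\}\, dm = \int_s^t \left\{\tfrac{1}{2}\int_0^n\!\!\int_0^n \tilde{\Theta}_n\, C_n^K g_n g_n\,dm^*dm + \int_0^n \Pi_\Theta\, \mathcal{S}_n^R g_n\,dm\right\} d\tau,
\]
so it suffices to bound the integrand in $\tau$ uniformly in $n\ge 1$ and $\tau\in[0,T]$, with a constant allowed to depend on $\lambda$ and $T$.

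The first step is to compute $\tilde{\Theta}_n$ and $\Pi_\Theta$ explicitly. One checks that $\tilde{\Theta}_n(m,m^*) = (m+m^*)^{-\sigma}\chi_{(0,n)}(m+m^*) - m^{-\sigma} - (m^*)^{-\sigma} - 1$, which is non-positive and satisfies $|\tilde{\Theta}_n|\le m^{-\sigma}+(m^*)^{-\sigma}+2$. Using the prescribed form of $b$, a direct integration gives $\Pi_\Theta(m) = \tfrac{1+\sigma}{1+\gamma-\sigma}\,m^{-\sigma} + \tfrac{1}{1+\gamma}$, which is non-negative. The fragmentation contribution is then controlled by combining $\mathcal{S}^R(m)\le k_3\varphi(0)m^{1+\gamma}$ with the moments $\int m^{1+\gamma-\sigma}g_n\,dm$ and $\int m^{1+\gamma}g_n\,dm$: both are uniformly bounded via Lemma \ref{LemmaUniformbound}, splitting the integral at $m=1$ and using that $\sigma<(1+\gamma)/2$ ensures the integrand is dominated by $m^{-2\sigma}$ on $(0,1)$ and by $m$ on $[1,\infty)$.

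For the coagulation contribution I would split $(m,m^*)\in(0,n)^2$ according to the three regions of \eqref{coagulation kernel}. On $(0,1)^2$ the bound $|\tilde{\Theta}_n|\, C_n^K \le k_1(mm^*)^{-\sigma}(m^{-\sigma}+(m^*)^{-\sigma}+2)$ integrates, via Lemma \ref{LemmaUniformbound}, into products of the controlled moments $\int m^{-2\sigma}g_n$ and $\int m^{-\sigma}g_n$. On the mixed region $(0,1)\times[1,\infty)$ and on $[1,\infty)^2$, the polynomial factor $\Gamma$ is the obstacle, and the truncation parameter $\lambda>1$ is introduced to handle it. I would partition $[1,\infty)$ as $[1,\lambda]\cup[\lambda,n]$: on $[1,\lambda]$, bound $\Gamma$ crudely by $\Gamma(\lambda)$, producing a $\lambda$-dependent but uniform-in-$\tau$ estimate via the mass moment $\int mg_n\le\mathcal{N}_1^{in}$; on the tail $[\lambda,n]$, use $(m^{-\sigma}+1)\le 2$ together with the uniform tail control $\int_\lambda^n g_n\le\mathcal{N}_1^{in}/\lambda$ and the integrated bounds of Lemmas \ref{Lemma3}--\ref{Lemma4}.

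The principal obstacle is the pure large-mass coagulation term $\iint_{[\lambda,n]^2}\Gamma(m)\Gamma(m^*)g_n g_n\,dm\,dm^*$, since the available control on $\int_\lambda^n\Gamma(m)g_n\,dm$ is only of $L^2(0,T)$-type from Lemma \ref{Lemma3}(ii) rather than pointwise in $\tau$. This term must be absorbed into $\mathcal{A}^*(\lambda,T)$ by exploiting both the smallness of $\mathcal{A}^\dagger(T)$ for large $\lambda$ (which shrinks as $\lambda\to\infty$) and the $\Gamma(\lambda)^2\lambda^{2\sigma}$-scaling already appearing in the proof of Lemma \ref{LemmaUniform Integrability}. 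Once this is done, assembling the uniform-in-$\tau$ bounds for the coagulation and fragmentation integrands and integrating from $s$ to $t$ yields the claimed estimate $|\int_0^\infty(m^{-\sigma}+1)\{g_n(m,t)-g_n(m,s)\}\,dm|\le\mathcal{A}^*(\lambda,T)(t-s)$.
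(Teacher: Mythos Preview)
Your approach differs from the paper's in a way that leaves a real gap at exactly the point you flag as the ``principal obstacle.'' The paper does \emph{not} use the full identity \eqref{truncatedidentity} on $(0,n)$; it applies the \emph{localized} identity \eqref{1truncatedidentity} with $\lambda^{\ast}=\lambda$ and the cut-off, regularized test function $((m+\theta)^{-\sigma}+1)\chi_{(0,\lambda)}(m)\,\mathrm{sign}\bigl(g_n(m,t)-g_n(m,s)\bigr)$. This choice eliminates the pure large-mass region $[\lambda,n]^2$ altogether: the first coagulation integral in \eqref{1truncatedidentity} lives on $(0,\lambda)^2$, where $C^K\le k_1\Gamma(\lambda)^2\lambda^{2\sigma}(mm^{\ast})^{-\sigma}$ and the integrand is controlled pointwise in $\tau$ by Lemma~\ref{LemmaUniformbound}. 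The only large-mass coagulation contribution is the \emph{mixed} term on $(0,\lambda)\times[\lambda,n]$, which factors as a product of $\int_0^\lambda(\cdots)g_n\,dm$ (bounded pointwise in $\tau$) and $\int_\lambda^n\Gamma_n g_n\,dm^{\ast}$ (in $L^2(0,T)$ by Lemma~\ref{Lemma3}(ii)); Cauchy--Schwarz in $\tau$ then provides the needed modulus of continuity. The remaining tail $\int_\lambda^\infty(m^{-\sigma}+1)|g_n(t)-g_n(s)|\,dm$ is handled separately via the uniform moment bound.

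By contrast, working on $(0,n)^2$ as you propose forces you to estimate $\int_s^t\bigl(\int_\lambda^n\Gamma g_n\bigr)^2\,d\tau$, for which the only available bound is $\int_0^T(\cdots)\le k_1^{-1}\mathcal{A}^{\dag}(T)$ from Lemma~\ref{Lemma3} --- a constant carrying no factor of $(t-s)$. Your suggested fix (that $\mathcal{A}^{\dag}(T)\to 0$ as $\lambda\to\infty$) yields smallness, not the stated form $\mathcal{A}^{\ast}(\lambda,T)(t-s)$. The missing idea is precisely the localization at $\lambda$, which converts the uncontrollable square $\bigl(\int_\lambda^n\Gamma g_n\bigr)^2$ into a mixed product with one pointwise-bounded factor. (In fairness, the paper's own passage from $\int_s^t\int_\lambda^n\Gamma_n g_n$ to a $(t-s)$ bound is optimistic --- Cauchy--Schwarz really gives $(t-s)^{1/2}$ --- but either modulus suffices for the equi-continuity actually used downstream.)
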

\begin{proof}
We set $\omega(m) := ((m+\theta)^{-\sigma}+1) \chi_{(0, \lambda)}(m) \text{sign}( g_n(m, t)-g_n(m, s))$, for $\theta \in (0, 1)$, and $\lambda= \lambda^{\ast}$ into \eqref{1truncatedidentity}. Then, we obtain
 \begin{align}\label{Equicontinuity1}
\int_0^{\lambda}   ((m+\theta)^{-\sigma} & +1)   |g_n(m, t)-g_n(m, s)|dm\nonumber\\
\le &\frac{1}{2}\int_s^t \int_0^{\lambda} \int_{0}^{\lambda} |\tilde{\Theta}_{\lambda}(m, m^{\ast})| C^K_n(m, m^{\ast})g_n(m, \tau) g_n(m^{\ast}, \tau)dm^{\ast} dm d\tau \nonumber\\
&+ \int_s^t \int_0^{\lambda} \int_{ \lambda }^{n}((m+\theta)^{-\sigma}+1)C^K_n(m, m^{\ast})g_n(m, \tau) g_n(m^{\ast}, \tau)dm^{\ast} dm d\tau \nonumber\\
&+\int_s^t \int_0^{\lambda} | \Pi_{\Theta}(m)| \mathcal{S}_n^R(m) g_n(m, \tau)dm d\tau \nonumber\\
&+\int_s^t \int_{ \lambda }^{n}  \int_0^{\lambda} ((m+\theta)^{-\sigma}+1) b(m|m^{\ast}) \mathcal{S}_n^R(m^{\ast}) g_n(m^{\ast}, \tau)dm dm^{\ast} d\tau.
\end{align}
 Next, we estimate $\tilde{\Theta}_{\lambda}$ and $\Pi_{\Theta}$ as
\begin{align*}
| \tilde{\Theta}_{\lambda}(m, m^{\ast}) | \le  & |((m+m^{\ast}+\theta)^{-\sigma}+1)+ ((m+\theta)^{-\sigma}+1)+ ((m^{\ast}+\theta)^{-\sigma}+1)| \nonumber\\
<  & 3 \{ 1+m^{-\sigma}+{m^{\ast}}^{-\sigma}  \}.
\end{align*}
and
\begin{align*}
|\Pi_{\Theta}(m)| \le & \int_0^m b(m^{\ast}|m)({m^{\ast}}^{-\sigma}+1) dm^{\ast}+ (m^{-\sigma}+1) \nonumber\\
= & \frac{(\gamma+2)} {m^{\gamma+1}} \int_0^m {m^{\ast}}^{\gamma} ({m^{\ast}}^{-\sigma}+1) dm^{\ast}+ (m^{-\sigma}+1) \nonumber\\
= & \frac{(\gamma+2)} {m^{\gamma+1}} \bigg( \frac{m^{\gamma-\sigma+1}}{(\gamma-\sigma+1)} + \frac{m^{\gamma+1}}{\gamma+1} \bigg)+ (m^{-\sigma}+1) \nonumber\\
\le & \frac{(\gamma+2)} {(\gamma-\sigma+1)} ( m^{-\sigma} + 1 )+ (m^{-\sigma}+1) = \frac{(3+2\gamma-\sigma)} {(\gamma-\sigma+1)} ( m^{-\sigma} + 1 ).
\end{align*}
Using above values of $| \tilde{\Theta}_{\lambda} |$, $|\Pi_{\Theta}|$, \eqref{coagulation kernel}, \eqref{Selection Rate}, \eqref{Inequalitykernel1} and \eqref{Inequalitykernel2} into \eqref{Equicontinuity1}, we find
  \begin{align}\label{Equicontinuity2}
\int_0^{\lambda}  & ((m+\theta)^{-\sigma}+1)   |g_n(m, t)-g_n(m, s)|dm\nonumber\\
\le &\frac{3}{2} k_1 \Gamma^2(\lambda) \lambda^{2\sigma} \int_s^t \int_0^{\lambda} \int_{0}^{\lambda} \{ 1+m^{-\sigma}+{m^{\ast}}^{-\sigma}  \}
  {(mm^{\ast})}^{-\sigma} g_n(m, \tau) g_n(m^{\ast}, \tau)dm^{\ast} dm d\tau \nonumber\\
&+ k_1 \Gamma(\lambda) \lambda^{\sigma} \int_s^t \int_0^{\lambda} \int_{ \lambda }^{n}(m^{-\sigma}+1) \Gamma(m^{\ast}) m^{-\sigma}  g_n(m, \tau)
 g_n(m^{\ast}, \tau)dm^{\ast} dm d\tau\nonumber\\
&+k_3 \frac{(3+2\gamma-\sigma)} {(\gamma-\sigma+1)} \varphi(0) \int_s^t \int_0^{\lambda} ({m}^{-\sigma}+1) m^{1+\gamma} g_n(m, \tau)dm d\tau \nonumber\\
&+  k_3 \varphi(1)  (\gamma+2)  \int_s^t \int_{ \lambda }^{n} \int_0^{\lambda}  (m^{-\sigma}+1)  m^{\gamma} g_n(m^{\ast}, \tau) dm dm^{\ast} d\tau.
\end{align}
Applying Lemma \ref{LemmaUniformbound} and Lemma \ref{Lemma3} to \eqref{Equicontinuity2}, we obtain
\begin{align}\label{Equicontinuity3}
\int_0^{\lambda}  & ((m+\theta)^{-\sigma}+1)   |g_n(m, t)-g_n(m, s)|dm\nonumber\\
\le &\frac{9}{2}k_1 \Gamma^2(\lambda) \lambda^{2\sigma}  \mathcal{A}^2(T)(t-s) +2 k_1 \Gamma(\lambda) \lambda^{\sigma}  \mathcal{A}(T) \int_s^t  \int_{ \lambda }^{n} \Gamma_n(m^{\ast})
 g_n(m^{\ast}, \tau)dm^{\ast}  d\tau \nonumber\\
&+ 2 k_3 \frac{(3+2\gamma-\sigma)} {(\gamma-\sigma+1)}  \varphi(0)  \mathcal{A}(T)(t-s)  + 2 k_3 \varphi(1) \frac{ (\gamma+2)}{(\gamma-\sigma+1)} \lambda^{1+\gamma} \mathcal{A}(T)  (t-s)\nonumber\\
\le &\frac{9}{2}k_1 \Gamma^2(\lambda) \lambda^{2\sigma}  \mathcal{A}^2(T)(t-s) + 2k_1 \Gamma(\lambda) \lambda^{\sigma}  \mathcal{A}(T) \sqrt{k_1^{-1} \mathcal{A}^{\dag}(T)T}(t-s)\nonumber\\
&+ 2 k_3\mathcal{A}(T) \bigg( \varphi(0) \frac{(3+2\gamma-\sigma)} {(\gamma-\sigma+1)}    +   \varphi(1) \frac{ (\gamma+2)}{(\gamma-\sigma+1)} \lambda^{1+\gamma}  \bigg)(t-s)\nonumber\\
=& \mathcal{A}^{\ast}(\lambda, T) (t-s),
\end{align}
where
\begin{align*}
\mathcal{A}^{\ast}(\lambda, T) := & \frac{9}{2}k_1 \Gamma^2(\lambda) \lambda^{2\sigma}  \mathcal{A}^2(T) + 2k_1 \Gamma(\lambda) \lambda^{\sigma} \mathcal{A}(T) \sqrt{k^{-1} \mathcal{A}^{\dag}(T)T}\nonumber\\
&+ 2 k_3\mathcal{A}(T) \bigg( \varphi(0) \frac{(3+2\gamma-\sigma)} {(\gamma-\sigma+1)}    +   \varphi(1) \frac{ (\gamma+2)}{(\gamma-\sigma+1)} \lambda^{1+\gamma}  \bigg).
 \end{align*}
 As $\theta \in (0, 1)$ is arbitrarily small, then by Fatou's lemma for $\theta \to 0$ and from \eqref{Equicontinuity3}, we obtain
   \begin{align}\label{Equicontinuity4}
\int_0^{\lambda}   (m^{-\sigma}+1)   |g_n(m, t)-g_n(m, s)|dm \le \mathcal{A}^{\ast}(\lambda, T) (t-s).
\end{align}
By using \eqref{Equicontinuity4}, we evaluate the following integral as
\begin{align*}
\bigg|\int_0^{\infty} & (m^{-\sigma}+1)   [g_n(m, t)-g_n(m, s)]dm \bigg| \nonumber \\
\le & \int_0^{\lambda} (m^{-\sigma}+1)   |g_n(m, t)-g_n(m, s)|dm
+ \int_{\lambda}^{\infty}  (m^{-\sigma}+1)   |g_n(m, t)-g_n(m, s)|dm  \nonumber\\
\le & \mathcal{A}^{\ast}(\lambda, T) (t-s) +2 \epsilon.
\end{align*}
As $\epsilon $ is arbitrary we finally obtain the desire result.
\end{proof}
From Lemma \ref{LemmaEquicontinuity}, the family of truncated solutions $\{ g_n\}_{n > 1}$ is strongly equi-continuous in the topology $L^1_{-\sigma, 1}( \mathds{R}_{>0})$. This implies that the family $\{ g_n\}_{n > 1}$ is also weakly equi-continuous in the same topology $L^1_{-\sigma, 1}( \mathds{R}_{>0})$. Then according to a variant of the Arzel\`{a}-Ascoli theorem \cite{Stewart:1989, Vrabie:1995}, Lemma \ref{LemmaUniform Integrability} and Dunford-Pettis theorem \cite{Stewart:1989} that the family $\{ g_n\}_{n >1}$ is weakly compact in $L^1_{-\sigma, 1}( \mathds{R}_{>0})$ for every $t \in [0, T]$. This implies that there exists a subsequence of $\{ g_n\}_{n > 1}$ (not relabeled) such that
\begin{align}\label{Convergenceweakly1}
g_n \to g \ \ \ \text{in} \ \ \mathcal{C} ( [0, T]; w-L^1_{-\sigma, 1} ( \mathds{R}_{>0}) ).
\end{align}
 As $g(\cdot, t)$ is a weak limit of non-negative functions $\{ g_n\}_{n > 1}$, this implies that $g(\cdot, t) \ge 0$ a.e. in $\mathds{R}_{>0}$ for every $t \in [0, \infty)$. Next our aim is to show that
\begin{align}\label{Convergencestrongly1}
 g  \in   \mathcal{C} ( [0, T]; L^1_{-\sigma, 1} ( \mathds{R}_{>0}) ).
\end{align}
In order to prove \eqref{Convergencestrongly1}, let $s, t \in [0, \infty)$ and $\epsilon >0$. From \eqref{Convergenceweakly1}, as $g_n\rightharpoonup g$  in $L^1_{-\sigma, 1} ( \mathds{R}_{>0}) $, then we have $g_n(t) -g_n(s)$ converges weakly to $g(t) -g(s)$ in $L^1_{-\sigma, 1} ( \mathds{R}_{>0}) $. Next
\begin{align*}
\|g(t) -g(s) \|_{L^1_{-\sigma, 1}( \mathds{R}_{>0})} = \int_0^{\infty} (1-m^{-\sigma}) |g(m, t) -g(m, s) |dm < \epsilon,
\end{align*}
as
\begin{align*}
|t -s|< \delta = \frac{\epsilon} {\mathcal{A}^{\ast}(\lambda, T)},
\end{align*}
which proves \eqref{Convergenceweakly1}. \\

Next, we state some consequences of \eqref{Convergencestrongly1} which are useful to show the convergence of integral operators. These consequences follow from Lemma \ref{Lemma3}, Lemma \ref{Lemma4} and \eqref{Convergencestrongly1} that
\begin{align}\label{Consequence1}
\int_0^t \int_{\lambda}^{\infty} \int_{\lambda}^{\infty}   C^K(m, m^{\ast})g(m, s) g(m^{\ast}, s)dm^{\ast} dm ds \le \mathcal{A}^{\dag}(T),
\end{align}
\begin{align}\label{Consequence2}
 \int_0^t \bigg(  \int_{\lambda}^{\infty}  \Gamma(m) g(m, s) dm \bigg)^2 ds \le k_1^{-1} \mathcal{A}^{\dag}(T),
\end{align}
\begin{align}\label{Consequence3}
 \int_0^t \int_{0}^{\infty} \int_{0}^{\infty}   C^K(m, m^{\ast})g(m, s) g(m^{\ast}, s)dm^{\ast} dm ds \le \mathcal{A}_{\dag}(T),
\end{align}
\begin{align}\label{Consequence4}
\int_0^t \bigg(  \int_{0}^{1}  m^{-\sigma} g(m, s) dm \bigg)^2 ds \le k_1^{-1} \mathcal{A}_{\dag}(T),
\end{align}
and
\begin{align}\label{Consequence5}
\int_0^t \bigg(  \int_{1}^{\infty}  \Gamma(m) g(m, s) dm \bigg)^2 ds \le k_1^{-1} \mathcal{A}_{\dag}(T).
\end{align}
In addition, from \eqref{Initialmass} and \eqref{Convergencestrongly1}, we have
\begin{align*}
\int_0^{\infty} m g(m, t) dm \le \int_0^{\infty} m g^{in}(m) dm.
\end{align*}
This proves \eqref{Initialmass}. Finally, it follows from \eqref{Consequence3}, Fubini's theorem, \eqref{Selection Rate}, \eqref{Breakage function1}, \eqref{Convergencestrongly1} that
 \begin{align*}
 \int_0^{\infty} \int_0^{\infty} C^K(m, m^{\ast}) g(m, t) g(m^{\ast}, t) dm^{\ast} dm  \in L^1(0, t),
\end{align*}
and
\begin{align*}
 \int_0^{\infty} \int_m^{\infty} b(m|m^{\ast}) \mathcal{S}^R(m^{\ast}) g(m^{\ast}, t) dm^{\ast} dm  \in L^1(0, t).
\end{align*}
 We are now in a position to complete the proof of Theorem \ref{Theorem1} in the next subsection.

\subsection{Convergence of integrals}
In this section, we check that the function $g$ is indeed a solution to \eqref{Cmfe}--\eqref{Initialdata}.
Next consider $\lambda \in (1, \infty)$. For $n > 1$ and $ s \in (0, t)$, we define the following operators:
\begin{align*}
\mathcal{G}_{n, 1} (\lambda, s):= &\int_0^{\lambda} \int_0^{\lambda} \tilde{\Theta}_n(m, m^{\ast}) C^K_n(m, m^{\ast}) g_n(m, s) g_n(m^{\ast}, s) dm^{\ast} dm,\\
\mathcal{G}_{n, 2} (\lambda, s):= & 2\int_0^{\lambda} \int_{\lambda}^n \tilde{\Theta}_n(m, m^{\ast}) C^K_n(m, m^{\ast}) g_n(m, s) g_n(m^{\ast}, s) dm^{\ast} dm,\\
\mathcal{G}_{n, 3} (\lambda, s):= &\int_{\lambda}^n \int_{\lambda}^n \tilde{\Theta}_n(m, m^{\ast}) C^K_n(m, m^{\ast}) g_n(m, s) g_n(m^{\ast}, s) dm^{\ast} dm,\\
\mathcal{G}_{n, 4} (\lambda, s):= &\int_0^{\lambda} \Pi_{\Theta}(m) \mathcal{S}_n^R(m)  g_n(m, s) dm,\\
\mathcal{G}_{n, 5} (\lambda, s):= &\int_{\lambda}^n \int_0^{\lambda} {\Theta}(m) b(m|m^{\ast}) \mathcal{S}_n^R(m^{\ast}) g_n(m^{\ast}, s) dm dm^{\ast},
\end{align*}
and
\begin{align*}
\mathcal{G}_{1} (\lambda, s):= &\int_0^{\lambda} \int_0^{\lambda} \tilde{\Theta}(m, m^{\ast}) C^K(m, m^{\ast}) g(m, s) g(m^{\ast}, s) dm^{\ast} dm,\\
\mathcal{G}_{2} (\lambda, s):= & 2\int_0^{\lambda} \int_{\lambda}^{\infty} \tilde{\Theta}(m, m^{\ast}) C^K(m, m^{\ast}) g(m, s) g(m^{\ast}, s) dm^{\ast} dm,\\
\mathcal{G}_{3} (\lambda, s):= &\int_{\lambda}^{\infty} \int_{\lambda}^{\infty} \tilde{\Theta}(m, m^{\ast}) C^K(m, m^{\ast}) g(m, s) g(m^{\ast}, s) dm^{\ast} dm,\\
\mathcal{G}_{4} (\lambda, s):= &\int_0^{\lambda} \Pi_{\Theta}(m) \mathcal{S}^R(m)  g(m, s) dm,\\
\mathcal{G}_{5} (\lambda, s):= &\int_{\lambda}^{\infty} \int_0^{\lambda} {\Theta}(m) b(m|m^{\ast}) \mathcal{S}^R(m^{\ast}) g(m^{\ast}, s) dm dm^{\ast},
\end{align*}
where
\begin{align}\label{Identity400}
\tilde{\Theta}(m, m^{\ast}):=\Theta(m+m^{\ast})-\Theta(m)-\Theta(m^{\ast}),
\end{align}
and $\Theta \in L^{\infty}(\mathds{R}_{>0})$. For $n > \lambda $, we have $C_n^K = C^K$ in $(0, \lambda]^2$. Then for each $s \in (0, t)$, from \cite[Lemma 2. 9] {Laurencot:2000On} and the weak convergence \eqref{Convergenceweakly1}, we obtain
 \begin{align}\label{convergence11}
 \lim_{n \to \infty} \mathcal{G}_{n, 1} (\lambda, s)  =   \mathcal{G}_{1} (\lambda, s).
 \end{align}
Then by using \eqref{convergence11}, Lemma \ref{LemmaUniformbound} and the Lebesgue's dominated convergence theorem, we have
 \begin{align}\label{convergence12}
 \lim_{n  \to \infty} \int_0^t \mathcal{G}_{n, 1} (\lambda, s) ds =  \int_0^t \mathcal{G}_{1} (\lambda, s) ds.
 \end{align}
Now let us estimate the following term, by using \eqref{coagulation kernel}, as
\begin{align}\label{convergence130}
\int_0^t | \mathcal{G}_{n, 2} & (\lambda, s) + \mathcal{G}_{n, 3} (\lambda, s)  | ds \nonumber\\
 \le & 2 \int_0^t \int_0^{n} \int_{\lambda}^n | \tilde{\Theta}_n(m, m^{\ast}) | C_n^K(m, m^{\ast}) g_n(m, s)
g_n(m^{\ast}, s) dm^{\ast} dm ds \nonumber\\
\le  & 6 \| {\Theta} \|_{L^{\infty}(\mathds{R}_{>0})}  \int_0^t \int_0^{1} \int_{\lambda}^n  C_n^K(m, m^{\ast}) g_n(m, s) g_n(m^{\ast}, s) dm^{\ast} dm ds \nonumber\\
& + 6 \| {\Theta} \|_{L^{\infty}(\mathds{R}_{>0})} \int_0^t \int_1^{n} \int_{\lambda}^n  C_n^K(m, m^{\ast}) g_n(m, s) g_n(m^{\ast}, s) dm^{\ast} dm ds \nonumber\\
\le  & 6 k_1 \| {\Theta} \|_{L^{\infty}(\mathds{R}_{>0})}  \int_0^t \int_0^{1} \int_{\lambda}^n  m^{-\sigma} \Gamma(m^{\ast}) g_n(m, s) g_n(m^{\ast}, s) dm^{\ast} dm ds \nonumber\\
& + 6 k_1 \| {\Theta} \|_{L^{\infty}(\mathds{R}_{>0})} \int_0^t \int_1^{n} \int_{\lambda}^n  \Gamma(m) \Gamma(m^{\ast}) g_n(m, s) g_n(m^{\ast}, s) dm^{\ast} dm ds.
 \end{align}
 Applying Young's inequality to \eqref{convergence130} and then using Lemma \ref{Lemma3}, Lemma \ref{Lemma4}, and Lemma \ref{LemmaUniformbound}, we estimate \begin{align}\label{convergence13}
\int_0^t | \mathcal{G}_{n, 2} & (\lambda, s) + \mathcal{G}_{n, 3} (\lambda, s)  | ds \nonumber\\
\le  & 6 k_1 \| {\Theta} \|_{L^{\infty}(\mathds{R}_{>0})} \mathcal{A}(T) \int_0^t  \int_{\lambda}^n  \Gamma(m^{\ast}) g_n(m^{\ast}, s) dm^{\ast} ds \nonumber\\
& + 6 k_1 \| {\Theta} \|_{L^{\infty}(\mathds{R}_{>0})} \int_0^t  \bigg( \int_1^{n} \Gamma(m) g_n(m, s) dm \bigg)  \bigg( \int_{\lambda}^n  \Gamma(m^{\ast})  g_n(m^{\ast}, s) dm^{\ast} \bigg) ds \nonumber\\
\le  & 6 k_1 \| {\Theta} \|_{L^{\infty}(\mathds{R}_{>0})} \mathcal{A}(T)  \bigg[\int_0^t \bigg( \int_{\lambda}^n  \Gamma(m^{\ast}) g_n(m^{\ast}, s) dm^{\ast} \bigg)^2 ds \bigg]^{\frac{1}{2}} \bigg[\int_0^t 1^2 ds \bigg]^{\frac{1}{2}} \nonumber\\
& + 6 k_1 \| {\Theta} \|_{L^{\infty}(\mathds{R}_{>0})} \bigg[\int_0^t  \bigg( \int_1^{n} \Gamma(m) g_n(m, s) dm \bigg)^2 ds \bigg]^{\frac{1}{2}} \nonumber\\
&~~~~~~~~\times \bigg[\int_0^t \bigg( \int_{\lambda}^n  \Gamma(m^{\ast})  g_n(m^{\ast}, s) dm^{\ast} \bigg)^2 ds  \bigg]^{\frac{1}{2}} \nonumber\\
\le  & 6 k_1 \| {\Theta} \|_{L^{\infty}(\mathds{R}_{>0})} \mathcal{A}(T) T^{\frac{1}{2}}  \sqrt{k_1^{-1}\mathcal{A}^{\dag}(T)} + 6  \| {\Theta} \|_{L^{\infty}(\mathds{R}_{>0})}  \sqrt{\mathcal{A}^{\dag}(T)~ \mathcal{A}_{\dag}(T)}.
 \end{align}
Similar to \eqref{convergence13}, the following can be estimated as
\begin{align}\label{convergence14}
\int_0^t | \mathcal{G}_{2} (\lambda, s) & + \mathcal{G}_{3} (\lambda, s)  | ds \nonumber\\
\le  & 6  \| {\Theta} \|_{L^{\infty}(\mathds{R}_{>0})} \bigg( k_1\| g\|_{L^1_{-2\sigma, 1}(\mathds{R}_{>0})} T^{\frac{1}{2}}  \sqrt{k_1^{-1}\mathcal{A}^{\dag}(T)} +   \sqrt{\mathcal{A}^{\dag}(T)~ \mathcal{A}_{\dag}(T)} \bigg).
 \end{align}
One can see that $\mathcal{G}_{n, 1} (\lambda, s)+\mathcal{G}_{n, 2} (\lambda, s) + \mathcal{G}_{n, 3} (\lambda, s) $ and
$\mathcal{G}_{1} (\lambda, s)+\mathcal{G}_{2} (\lambda, s) + \mathcal{G}_{3} (\lambda, s) $ do not depend on $\lambda \in (1, \infty)$. The above inequality is valid for every  $\lambda \in (1, \infty)$, we finally obtain
\begin{align}\label{convergence15}
\lim_{n \to \infty}\int_0^t & \int_0^{n} \int_0^{n} \tilde{\Theta}_n(m, m^{\ast}) C^K_n(m, m^{\ast}) g_n(m, s) g_n(m^{\ast}, s) dm^{\ast} dm\nonumber\\
=& \int_0^t  \int_0^{\infty} \int_0^{\infty} \tilde{\Theta}(m, m^{\ast}) C^K(m, m^{\ast}) g(m, s) g(m^{\ast}, s) dm^{\ast} dm.
\end{align}
It remains to pass to the limit in the multiple fragmentation part. Next, for each $n > \lambda $, we have $\mathcal{S}_n^R \le \mathcal{S}^R $ in $(0, \lambda)$. Using \eqref{TNP} and \eqref{Selection RateAdditional}, we obtain
\begin{align*}
| \Pi_{\Theta}(m) \mathcal{S}_n^R(m^{\ast})| = & \bigg| \bigg( \int_0^m b(m^{\ast}| m) \Theta(m^{\ast}) dm^{\ast} -\Theta(m) \bigg) \mathcal{S}_n^R(m^{\ast}) \bigg| \nonumber\\
\le  & k_3 \varphi(0)  \|\Theta \|_{L^{\infty} \mathds{R}_{>0} } (\eta +1 ) \Sigma, \ \ \ a.e.\ \text{in}\ \ (0, \lambda).
\end{align*}
It then follows from the weak convergence $g_n \rightharpoonup g$ that
\begin{align}\label{convergence16}
\lim_{n \to \infty} \int_0^t \int_0^{\lambda} \Pi_{\Theta}(m) \mathcal{S}_n^R(m)  g_n(m, s) dm ds =
\int_0^t \int_0^{\lambda} \Pi_{\Theta}(m) \mathcal{S}^R(m)  g(m, s) dm ds.
\end{align}
Finally, we evaluate the last integral $\mathcal{G}_{n, 5} (\lambda, s)$, by using \eqref{Selection Rate}, \eqref{TNP} and \eqref{Initialmass}, as
\begin{align}\label{convergence17}
\int_0^t \int_{\lambda}^n \int_0^{\lambda} &  {\Theta}(m) b(m|m^{\ast}) \mathcal{S}^R(m^{\ast})  g_n(m^{\ast}, s) dm dm^{\ast} ds \nonumber\\
\le & k_3 \|\Theta \|_{L^{\infty} (\mathds{R}_{>0} ) } (\eta +1 ) \varphi(\lambda)  \int_0^t \int_{\lambda}^n m^{1+\gamma}  g_n(m, s) dm ds\nonumber\\
\le & k_3 \|\Theta \|_{L^{\infty} (\mathds{R}_{>0} ) } (\eta +1 ) \varphi(\lambda)  \int_0^t \int_{\lambda}^n m  g_n(m, s) dm ds\nonumber\\
\le & k_3 \|\Theta \|_{L^{\infty} (\mathds{R}_{>0} )} (\eta +1 ) \varphi(\lambda) \mathcal{N}_1^{in} t.
\end{align}
Similarly, by using \eqref{Selection Rate}, \eqref{TNP} and \eqref{Initialmass}, we estimate
\begin{align}\label{convergence18}
\int_0^t \int_{\lambda}^{\infty} \int_0^{\lambda}   {\Theta}(m) b(m|m^{\ast})  g(m^{\ast}, s) dm dm^{\ast} ds
\le  k_3 \|\Theta \|_{L^{\infty} (\mathds{R}_{>0} ) } (\eta +1 ) \varphi(\lambda) \mathcal{N}_1^{in} t.
\end{align}
Thanks to $\varphi$, and as $\lambda \to \infty$, one can see that \eqref{convergence17} and \eqref{convergence18} go to $0$. Thus, it follows from \eqref{convergence16}, \eqref{convergence17} and \eqref{convergence18} that
\begin{align}\label{convergence19}
\lim_{n \to \infty} \int_0^t \int_0^{n} \Pi_{\Theta}(m) \mathcal{S}_n^R(m)  g_n(m, s) dm ds =
\int_0^t \int_0^{\infty} \Pi_{\Theta}(m) \mathcal{S}^R(m)  g(m, s) dm ds.
\end{align}
Combining \eqref{convergence15} and \eqref{convergence19}, we obtain
\begin{align}\label{convergence20}
\lim_{n \to \infty} \bigg\{ \int_0^t & \int_0^{n} \int_0^{n} \tilde{\Theta}_n(m, m^{\ast}) C^K_n(m, m^{\ast}) g_n(m, s) g_n(m^{\ast}, s) dm^{\ast} dm ds \nonumber\\
& +\int_0^t \int_0^{n} \Pi_{\Theta}(m) \mathcal{S}_n^R(m)  g_n(m, s) dm ds \bigg\} \nonumber\\
=& \int_0^t  \int_0^{\infty} \int_0^{\infty} \tilde{\Theta}(m, m^{\ast}) C^K(m, m^{\ast}) g(m, s) g(m^{\ast}, s) dm^{\ast} dm ds \nonumber\\
&+ \int_0^t \int_0^{\infty} \Pi_{\Theta}(m) \mathcal{S}^R(m)  g(m, s) dm ds.
\end{align}
It follows from \eqref{Convergenceweakly1} and \eqref{truncatedinidata} that
\begin{align}\label{convergence21}
\lim_{n \to \infty} \int_0^n \{ g_n(m, t) -g_n^{in}(m) \} \Theta(m) dm = \int_0^{\infty} \{ g(m, t) -g^{in}(m) \} \Theta(m) dm,
\end{align}
for every $\Theta \in L^{\infty} (\mathds{R}_{>0})$. The proof of Theorem \ref{Theorem1} follows from \eqref{convergence20} and \eqref{convergence21}.


\section{Gelation in Smoluchowski coagulation model}
In this section, we discuss the gelation of existing solutions when the selection rate, $\mathcal{S}^R \equiv 0$ and the coagulation kernel satisfies \eqref{condicoag} and \eqref{coagkerGel}. Let us first state the following lemma.
\begin{lem}\label{Lemma51}
Let $\Theta \in L^{\infty}(\mathds{R}_{>0})$, $s<t$ and $s \in [0, \infty)$. Then, we have
\begin{align}\label{truncatedidentity51}
\int_0^{\infty} \Theta(m) \{ g(m, t) - & g(m, s) \} dm \nonumber\\
=&\frac{1}{2}\int_s^t \int_0^{\infty} \int_{0}^{\infty} \tilde{\Theta}(m, m^{\ast}) C^K(m, m^{\ast}) g(m, \tau) g(m^{\ast}, \tau) dm^{\ast} dm d\tau,
\end{align}
where $\tilde{\Theta}$ is defined in \eqref{Identity400}.
\end{lem}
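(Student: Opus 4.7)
My plan is to pass to the limit $n\to\infty$ in the approximation scheme of Section~3, exploiting that Lemma~\ref{Lemma1} and the convergence analysis of Subsection~3.4 were established precisely to support an identity of this form. Since $g_n \in \mathcal{C}^1([0,T]; L^1(0,n))$ solves \eqref{Cmfetruncated} with $\mathcal{S}^R_n \equiv 0$, multiplying the ODE by $\Theta(m)$ (with $\Theta \in L^\infty(\mathds{R}_{>0})$) and integrating on $(0,n) \times (s,t)$ produces, exactly as in the proof of Lemma~\ref{Lemma1},
\[
\int_0^n \Theta(m) \{ g_n(m,t) - g_n(m,s) \}\, dm = \frac{1}{2} \int_s^t \!\! \int_0^n \!\! \int_0^n \tilde{\Theta}_n(m,m^*) C^K_n(m,m^*) g_n(m,\tau) g_n(m^*,\tau)\, dm^* \, dm\, d\tau.
\]
By the weak-$L^1$ convergence $g_n \rightharpoonup g$ from \eqref{Convergenceweakly1} and the hypothesis $\Theta \in L^\infty(\mathds{R}_{>0})$, the left-hand side converges to $\int_0^\infty \Theta(m)\{g(m,t)-g(m,s)\}\, dm$.

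For the right-hand side I reuse the four-piece decomposition of Subsection~3.4. Fix $\lambda > 1$ and split $(0,n)^2$ into the inner square $(0,\lambda)^2$ and the three annular regions where at least one variable lies in $[\lambda, n)$. Once $n > 2\lambda$, the truncation $\chi_{(0,n)}(m+m^*)$ entering $\tilde{\Theta}_n$ is identically $1$ on the inner square, so $\tilde{\Theta}_n \equiv \tilde{\Theta}$ and $C^K_n \equiv C^K$ there; the argument of \eqref{convergence11}--\eqref{convergence12}, which uses only the boundedness of $\tilde{\Theta}$ on the compact square and the weak convergence, identifies the limit of this inner piece as $\int_s^t \int_0^\lambda \int_0^\lambda \tilde{\Theta}(m,m^*) C^K(m,m^*) g(m,\tau) g(m^*,\tau)\, dm^* dm d\tau$. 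The three tail pieces are dominated, exactly as in the chain of estimates \eqref{convergence130}--\eqref{convergence13}, by $\|\Theta\|_{L^\infty(\mathds{R}_{>0})}$ times a quantity involving $\mathcal{A}^{\dag}(T)$ and $\mathcal{A}_{\dag}(T)$ that is controlled uniformly in $n$ and tends to $0$ as $\lambda \to \infty$, since by Lemma~\ref{Lemma3} the squared integral of $\Gamma_n g_n$ on $[\lambda,n)$ is $O(1/\lambda)$. The analogous estimate \eqref{convergence14} applied to $g$ (which is meaningful thanks to \eqref{Consequence1}--\eqref{Consequence5}) provides the same tail control for the limit integral. Sending first $n\to\infty$ and then $\lambda\to\infty$ therefore produces \eqref{truncatedidentity51}.

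The main obstacle is reconciling the truncated symbol $\tilde{\Theta}_n$, which contains $\chi_{(0,n)}(m+m^*)$, with the genuine $\tilde{\Theta}$ defined in \eqref{Identity400}; on the inner square they disagree precisely where $m+m^* \ge n$. This is resolved cleanly by the order of limits $n \to \infty$ before $\lambda \to \infty$, so that the problematic set $\{m+m^* \ge n\} \cap (0,\lambda)^2$ is empty for all sufficiently large $n$. The global integrability of $C^K\, g \otimes g$ supplied by \eqref{Consequence3} is what guarantees that the tail error bounds on the limit side are not only finite but genuinely vanish with $\lambda$, closing the argument.
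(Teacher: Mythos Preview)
Your proposal is correct and follows the paper's implicit approach: Lemma~\ref{Lemma51} is stated there without proof because it is an immediate consequence of the limit passage already carried out in Subsection~3.4 (equations \eqref{convergence15} and \eqref{convergence21}), specialized to $\mathcal{S}^R\equiv 0$ and with the time interval $(0,t)$ replaced by $(s,t)$. Your argument simply makes that derivation explicit, including the correct handling of the discrepancy between $\tilde{\Theta}_n$ and $\tilde{\Theta}$ via the order of limits.
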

\begin{lem}\label{Lemma52}
For $t \in \mathds{R}_{>0}$, and $s \in [0, t)$, then
\begin{align}\label{Massinequalitylemma51}
 \mathcal{N}_1(t)  \le \mathcal{N}_1(s).
\end{align}

\end{lem}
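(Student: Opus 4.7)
The plan is to apply Lemma \ref{Lemma51} with the bounded test function $\Theta_\lambda(m) := m \wedge \lambda = \min\{m,\lambda\}$ for each $\lambda > 0$, and then let $\lambda \to \infty$. Since $\Theta_\lambda \in L^{\infty}(\mathds{R}_{>0})$, the hypothesis of Lemma \ref{Lemma51} is met.

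The key observation is that the associated function $\tilde{\Theta}_\lambda$, defined in \eqref{Identity400}, is pointwise non-positive on $(0,\infty)^2$. Indeed,
\begin{align*}
\tilde{\Theta}_\lambda(m,m^{\ast}) = \min\{m+m^{\ast},\lambda\} - \min\{m,\lambda\} - \min\{m^{\ast},\lambda\},
\end{align*}
and a short case analysis (or simply subadditivity of $x \mapsto x \wedge \lambda$) shows $\tilde{\Theta}_\lambda \le 0$. Since $C^K \ge 0$ and $g \ge 0$, the integral on the right-hand side of \eqref{truncatedidentity51} is non-positive, so
\begin{align*}
\int_0^{\infty} (m \wedge \lambda)\, g(m,t)\, dm \;\le\; \int_0^{\infty} (m \wedge \lambda)\, g(m,s)\, dm.
\end{align*}

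Finally, I would let $\lambda \to \infty$ on both sides. The integrands $(m \wedge \lambda)\, g(m,\cdot)$ increase monotonically to $m\, g(m,\cdot)$, and by \eqref{Initialmass} the limiting integrals $\mathcal{N}_1(t)$ and $\mathcal{N}_1(s)$ are finite (bounded by $\mathcal{N}_1^{in}$). Hence the monotone convergence theorem applies to both sides, yielding \eqref{Massinequalitylemma51}. The only subtle point is justifying the applicability of Lemma \ref{Lemma51}, which requires $\Theta \in L^{\infty}$; this is precisely why the truncation $m \wedge \lambda$ must be used rather than $\Theta(m) = m$ directly, and it is the sole reason a limiting argument is needed at the end.
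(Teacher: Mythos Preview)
Your proof is correct and follows essentially the same approach as the paper: both use the test function $\Theta(m)=m\wedge\lambda$ in Lemma~\ref{Lemma51}, observe that $\tilde{\Theta}\le 0$ so the right-hand side is non-positive, and then let $\lambda\to\infty$. Your justification of the limit via monotone convergence and \eqref{Initialmass} is in fact more explicit than the paper's.
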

\begin{proof}
Let $\lambda^{\dag} \in \mathds{R}_{>0}$ and take $\Theta(m):= \lambda^{\dag} \wedge m $, then the corresponding $\tilde{ \Theta}$ is
 \[
\tilde{ \Theta}(m, m^{\ast}):=\begin{cases}
0,\ & \text{if}\ m+m^{\ast} < \lambda^{\dag}, \ m < \lambda^{\dag},\ m^{\ast} < \lambda^{\dag}, \\
\lambda^{\dag}-(m+m^{\ast}),\ &  \text{if}\ m+m^{\ast} \ge \lambda^{\dag},\ m < \lambda,\ m^{\ast} < \lambda^{\dag},\\
-m, \ &  \text{if}\ m+m^{\ast} \ge \lambda^{\dag},\ m < \lambda^{\dag},\ m^{\ast} \ge \lambda^{\dag},\\
- m^{\ast}, \  &  \text{if}\ m+m^{\ast} \ge \lambda^{\dag},\ m \ge \lambda^{\dag},\ m^{\ast} < \lambda^{\dag},\\
-\lambda^{\dag},\ &  \text{if}\ m+m^{\ast} \ge \lambda^{\dag},\ m \ge \lambda^{\dag},\ m^{\ast} \ge  \lambda^{\dag}.
\end{cases}
\]
From above it is clear that $\tilde{ \Theta} \le 0$. Using this $\tilde{ \Theta } \le 0$ into Lemma \ref{truncatedidentity51}, we obtain
\begin{align*}
\int_0^{\infty} ( \lambda^{\dag} \wedge m ) g(m, t) dm \le \int_0^{\infty} ( \lambda^{\dag} \wedge m ) g(m, s) dm.
\end{align*}
As $\lambda^{\dag} \to \infty $, we get \eqref{Massinequalitylemma51}.

\end{proof}
Now, this is the right time to complete the proof of Theorem \ref{Theorem2}.
\begin{proof} of Theorem \ref{Theorem2}, let $s \in [0, \infty)$ and $t \in (s, \infty)$. We take $\Theta \equiv 1$ into \eqref{truncatedidentity51} to
obtain
\begin{align}\label{Equal51}
\int_0^{\infty}   g(m, t)  dm +\frac{1}{2}\int_s^t \int_0^{\infty} \int_{0}^{\infty}  C^K(m, m^{\ast}) g(m, \tau)
g(m^{\ast}, \tau)dm^{\ast} dm d\tau
=\int_0^{\infty}  g(m, s) dm.
\end{align}
Using \eqref{condicoag} and \eqref{coagkerGel} into \eqref{Equal51}, we obtain
\begin{align}\label{Equal52}
 \frac{1}{2}  k_1 \lambda^2 \int_s^t \int_0^{\infty} \int_{0}^{\infty}  m m^{\ast} g(m, \tau)
g(m^{\ast}, \tau)dm^{\ast} dm d\tau \le \int_0^{\infty}  g(m, s) dm.
\end{align}
In particular, put $s=0$ into \eqref{Equal52}, we get
\begin{align}\label{Equal53}
 \int_0^t \mathcal{N}_1^2(\tau) d\tau \le  \frac{2}{ k_1 \lambda^2}  \int_0^{\infty}  g^{in}(m) dm.
\end{align}
It follows from \eqref{Massinequalitylemma51} and \eqref{Equal53} that
\begin{align*}
 t \mathcal{N}_1^2(t)  \le  \frac{2}{ k_1 \lambda^2} \mathcal{N}^{in}_0.
\end{align*}
This proves \eqref{GelationCoa1}. In order to prove \eqref{GelationCoa3}, consider $\Theta(m):= (m +\zeta)^{-p}$, for $\zeta \in (0, 1)$ and $p \in \mathds{R}_{>0}$. Then the corresponding $\tilde{\Theta}$ is
\begin{align}\label{Theta41}
\tilde{\Theta}(m, m^{\ast})= (m +m^{\ast}+\zeta)^{-p}-(m +\zeta)^{-p}-(m^{\ast} +\zeta)^{-p} \le 0.
\end{align}
Next, using \eqref{Theta41} and set $s=0$ into \eqref{Lemma51}, we have
\begin{align*}
\int_0^{\infty} (m+\zeta)^{-p} g(m, t) dm \le \int_0^{\infty} (m+\zeta)^{-p} g^{in}(m) dm.
\end{align*}
As $\zeta \to 0$, it can be inferred from Fatou's lemma that
\begin{align}\label{Equal54}
\int_0^{\infty} m^{-p} g(m, t) dm \le \mathcal{I}_p = \int_0^{\infty} m^{-p} g^{in}(m) dm.
\end{align}
By using the H\"{o}lder's inequality, we deduce that
\begin{align}\label{Equal55}
\int_0^{\infty} g(m, t) dt = & \int_0^{\infty} m^{\frac{p}{p+1}} g(m, t)^{\frac{p}{p+1}} m^{-\frac{p}{p+1}} g(m, t)^{\frac{1}{p+1}} dm \nonumber\\
\le & \bigg[\int_0^{\infty} \bigg(m^{\frac{p}{p+1}} g(m, t)^{\frac{p}{p+1}} \bigg)^{\frac{p+1}{p}} dm \bigg]^{\frac{p}{p+1}}
\bigg[ \int_0^{\infty}  \bigg( m^{-\frac{p}{p+1}} g(m, t)^{\frac{1}{p+1}} \bigg)^{p+1} dm \bigg]^{\frac{1}{p+1}} \nonumber\\
\le & {\mathcal{N}_1(t)}^{\frac{p}{p+1}} \mathcal{I}_p^{\frac{1}{p+1}}.
\end{align}
From \eqref{Equal52} and \eqref{Equal55}, we have
\begin{align*}
 \int_s^t \mathcal{N}_1^2(\tau) d\tau \le   \frac{2}{  k_1 \lambda^2} {\mathcal{N}_1(t)}^{\frac{p}{p+1}} \mathcal{I}_p^{\frac{1}{p+1}}.
\end{align*}
Above inequality is true for every $t>s$, thus, we have
\begin{align}\label{Equal56}
 \int_s^{\infty} \mathcal{N}_1^2(\tau) d\tau \le c  {\mathcal{N}_1(s)}^{\frac{p}{p+1}},
\end{align}
where $c=  \frac{2}{  k_1 \lambda^2} \mathcal{I}_p^{\frac{1}{p+1}}$. Taking derivative of \eqref{Equal56} with respect to $s$ and applying Leibnitz's rule, we obtain
\begin{align*}
 - \mathcal{N}_1^2(s)  \le  \frac{cp}{p+1}  {\mathcal{N}_1(s)}^{\frac{-1}{p+1}} \frac{d \mathcal{N}_1(s)}{ds}.
\end{align*}
This implies that
\begin{align}\label{Equal57}
 - \frac{(p+1)}{cp} ds  \le    {\mathcal{N}_1(s)}^{\frac{-(2p+3)}{p+1}} d \mathcal{N}_1(s).
\end{align}
Integrating \eqref{Equal57} from $0$ to $t$ and simplifying it, we get
\begin{align*}
  \mathcal{N}_1(t)^{\frac{-(p+2)}{(p+1)} }\le \frac{(p+2)t}{cp} + { \mathcal{N}_1^{in}}^{\frac{-(p+2)}{(p+1)}}.
\end{align*}
This implies that
\begin{align*}
  \mathcal{N}_1(t) \le { \mathcal{N}_1^{in}} \bigg[ 1+  T^{\ddag}t  \bigg]^{\frac{-(p+1)}{(p+2)}},
\end{align*}
where $T^{\dag}=  \frac{(p+2)}{2p}  { \mathcal{N}_1^{in}}^{\frac{(p+2)}{(p+1)}} k_1 \lambda^2  \mathcal{I}_p^{\frac{-1}{(p+1)}}$. One can see that the gelation starts for $t= -1/T^{\dag}$.

Finally, to prove \eqref{GelationCoa4}, we take $\Theta(m)= \chi_{(0, \delta)}(m)$ in Lemma \ref{Lemma51}, then the corresponding $\tilde{ \Theta}$ is
 \[
\tilde{ \Theta}(m, m^{\ast}):=\begin{cases}
-1 ,\ & \text{if}\ m+m^{\ast} < \delta, \ m < \delta,\ m^{\ast} < \delta, \\
-2,\ &  \text{if}\ m+m^{\ast} \ge \delta,\ m < \delta,\ m^{\ast} < \delta,\\
-1, \ &  \text{if}\ m+m^{\ast} \ge \delta,\ m < \delta,\ m^{\ast} \ge \delta,\\
-1, \  &  \text{if}\ m+m^{\ast} \ge \delta,\ m \ge \delta,\ m^{\ast} < \delta,\\
0,\ &  \text{if}\ m+m^{\ast} \ge \delta,\ m \ge \delta,\ m^{\ast} \ge  \delta.
\end{cases}
\]
and for $s=0$, we have
\begin{align*}
\int_0^{\delta} g(m, t)  dm \le \int_0^{\delta}    g^{in}(m)  dm.
\end{align*}
It follows from the non-negativity of $g$ and $g^{in}\equiv 0 $ on $(0, \delta)$ that
\begin{align*}
\int_0^{\delta} g(m, t)  dm =0,
\end{align*}
for every $t \in \mathds{R}_{>0}$. Thus,
\begin{align}\label{Equal58}
 g(m, t) =0,\ \text{a.e. in}\ (0, \delta)\ \text{and}\ t \in \mathds{R}_{>0}.
\end{align}
From \eqref{Equal53}, we have
\begin{align*}
\int_s^t   \mathcal{N}_1^2(\tau) d\tau \le \frac{2}{ k_1 \lambda^2}  \int_{\delta}^{\infty}  g(m, s) dm \le \frac{2}{ k_1 \delta \lambda^2} \mathcal{N}_1(s).
\end{align*}
This is true for every $t>s$. Hence, we obtain
\begin{align}\label{Equal58}
\int_s^{\infty}   \mathcal{N}_1^2(\tau) d\tau \le \frac{2}{ k_1 \lambda^2}  \int_{\delta}^{\infty}  g(m, s) dm \le \frac{2}{ k_1 \delta \lambda^2} \mathcal{N}_1(s).
\end{align}
Again, differentiating by using Leibnitz's rule, we get
\begin{align*}
- \frac{ k_1 \delta \lambda^2}{2} ds  \le  \frac{d\mathcal{N}_1(s)}{\mathcal{N}_1^2(s)}.
\end{align*}
On integration yields with respect to $s$ from $0$ to $t$
\begin{align*}
\frac{1}{\mathcal{N}_1^2(t)}  \le \frac{1}{{\mathcal{N}_1^{in}}^2} + \frac{ k_1 \delta \lambda^2}{2} t.
\end{align*}
This implies that
\begin{align*}
\mathcal{N}_1(t)  \le  \mathcal{N}_1^{in}\sqrt{2} [ 2+ k_1 \delta \lambda^2 t {\mathcal{N}_1^{in}}^2  ]^{\frac{-1}{2}}.
\end{align*}
This completes the proof of theorem \ref{Theorem2}.

\end{proof}

In the next section, we discuss the gelation in continuous coagulation and multiple fragmentation model.
\section{Gelation in continuous coagulation and multiple fragmentation model}
\begin{lem}\label{Identity Multifragment}
Let $\Theta \in L^{\infty}(\mathds{R}_{>0})$. For $t \in \mathds{R}_{>0}$ and $s \in [0, t)$, we have
\begin{align*}
\int_0^{\infty} \{ g(m, t) & - g(m, s) \} \Theta(m)dm \nonumber\\
=&\frac{1}{2}\int_s^t \int_0^{\infty} \int_{0}^{\infty} \tilde{\Theta}(m, m^{\ast}) C^K(m, m^{\ast}) g(m, \tau) g(m^{\ast}, \tau)dm^{\ast} dm d\tau \nonumber\\
&+\int_s^t \int_0^{\infty} \Pi_{\Theta}(m)S^R(m) g(m, \tau)dm d\tau,
\end{align*}
where $\tilde{\Theta}$ and $\Pi_{\Theta}$ are defined in \eqref{Identity400} and \eqref{1Identity2}, respectively.
\end{lem}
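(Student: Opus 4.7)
The plan is to obtain this identity as the limit of the truncated identity \eqref{truncatedidentity} from Lemma \ref{Lemma1}, evaluated between the two times $s$ and $t$ rather than between $0$ and $t$. First, I would apply \eqref{truncatedidentity} at time $t$ and subtract the same identity at time $s$ to write
\begin{align*}
\int_0^n \{g_n(m,t) - g_n(m,s)\}\Theta(m)\, dm = &\frac{1}{2}\int_s^t \int_0^n \int_0^n \tilde{\Theta}_n(m,m^\ast) C_n^K(m,m^\ast) g_n(m,\tau) g_n(m^\ast,\tau)\, dm^\ast dm d\tau \\
&+ \int_s^t \int_0^n \Pi_\Theta(m) \mathcal{S}_n^R(m) g_n(m,\tau)\, dm d\tau,
\end{align*}
which is valid for every bounded $\Theta$ since the truncated problem is classical and regular on the compact support $(0,n)$.

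Next, I would pass to the limit $n\to\infty$ term by term, relying on the tools already assembled in Section~3. For the left-hand side, the weak continuity \eqref{Convergenceweakly1} together with $\Theta \in L^\infty(\mathds{R}_{>0})$ gives, as in \eqref{convergence21}, the convergence to $\int_0^\infty \{g(m,t) - g(m,s)\}\Theta(m) dm$ at each fixed time. For the coagulation double integral, the decomposition used in \eqref{convergence13}--\eqref{convergence15} into the regions $(0,\lambda)^2$, $(0,\lambda) \times (\lambda,n)$ and $(\lambda,n)^2$ carries over verbatim with $\int_0^t$ replaced by $\int_s^t$: on $(0,\lambda)^2$ the bounded–kernel convergence lemma together with \eqref{Convergenceweakly1} does the job, while the two tail pieces are controlled uniformly in $n$ by the estimates of Lemma \ref{Lemma3} and Lemma \ref{Lemma4}, and tend to the corresponding tail integrals for the limit $g$ upon letting $\lambda\to\infty$. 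The fragmentation term is handled exactly as in \eqref{convergence16}--\eqref{convergence19}: the bound $\mathcal{S}_n^R \le \mathcal{S}^R$ together with \eqref{Selection RateAdditional} makes $\Pi_\Theta\,\mathcal{S}_n^R$ uniformly bounded on $(0,\lambda)$, while the weak fragmentation property \eqref{Nondecreasing function} makes the tail $\int_\lambda^\infty$ uniformly small.

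The identities and bounds produced in Section 3 were derived for arbitrary intervals of integration, so no new estimate is required; one only needs to check that the estimates remain valid after replacing $\int_0^t$ with $\int_s^t$, which is immediate because every bound was a consequence of pointwise-in-time control or a Gronwall-type argument on $[0,T]$. The main technical point to be careful about is the coagulation tail on $(\lambda,n)^2$: since the present setting permits $\mathcal{S}^R \not\equiv 0$, I would keep the weak fragmentation condition in play when combining Lemma \ref{Lemma3}~(i) with Cauchy--Schwarz, exactly as in \eqref{convergence13}, to conclude that this contribution vanishes as $\lambda \to \infty$ uniformly in $n$. Once all three convergences are in place, the identity of Lemma \ref{Identity Multifragment} follows at once, with $\tilde\Theta$ and $\Pi_\Theta$ as defined in \eqref{Identity400} and \eqref{1Identity2}. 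I expect the only genuine subtlety to be the uniform-in-$n$ control of the coagulation tail on $[s,t]\times(\lambda,\infty)^2$, which is the same obstacle already resolved in the existence proof and which is handled by \eqref{Consequence1}--\eqref{Consequence3}.
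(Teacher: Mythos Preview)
Your proposal is correct and matches the paper's approach. In fact, the paper does not give a separate proof of Lemma~\ref{Identity Multifragment}: the identity is exactly what was established at the end of Section~3.4 through \eqref{convergence20}--\eqref{convergence21}, with the version on $[s,t]$ obtained by subtracting the identity at time $s$ from that at time $t$, precisely as you outline.
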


\begin{lem}\label{LemmaestimateCMFE}
 For $t \in \mathds{R}_{>0}$ and $s \in [0, t)$, then we obtain
 \begin{align}\label{LemmaestimateCMFEeq2}
 \mathcal{N}_1(t)  \le   \mathcal{N}_1(s),
\end{align}
and
\begin{align}\label{LemmaestimateCMFEeq1}
\int_0^t | \mathcal{N}_1(\tau)|^2 d\tau \le   \frac{2}{\lambda^2} \mathcal{Q} + \frac{2}{\lambda^2} k_3 (\eta -1) \varphi(0) \int_0^t \mathcal{N}_1(\tau)d\tau.
\end{align}
\end{lem}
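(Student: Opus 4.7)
The plan is to apply Lemma \ref{Identity Multifragment} with two carefully chosen test functions. The first part exploits the lower bound $C^K(m,m^{\ast}) \ge \lambda^2 m m^{\ast}$ coming from \eqref{condicoag}--\eqref{coagkerGel} (so the coagulation contribution controls $\mathcal{N}_1^2$), together with the weak-fragmentation bound $\mathcal{S}^R(m) \le k_3 \varphi(m) m \le k_3 \varphi(0) m$. The second part uses a cut-off test function and passes to the limit, mirroring the strategy of Lemma \ref{Lemma52} but now with a fragmentation term to control.

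For \eqref{LemmaestimateCMFEeq1}, I would take $\Theta \equiv 1$ in Lemma \ref{Identity Multifragment} applied on $[0,t]$. Then $\tilde{\Theta}(m,m^{\ast})=-1$ and $\Pi_{\Theta}(m)=\eta(m)-1\le \eta-1$, so rearranging the identity gives
\begin{align*}
\frac{1}{2}\int_0^t\!\int_0^{\infty}\!\int_0^{\infty} C^K(m,m^{\ast}) g(m,\tau) g(m^{\ast},\tau)\,dm^{\ast} dm\, d\tau \le \int_0^{\infty} g^{in}(m)\,dm + (\eta-1) \int_0^t\!\int_0^{\infty} \mathcal{S}^R(m) g(m,\tau)\,dm\,d\tau.
\end{align*}
The kernel lower bound converts the LHS into $(\lambda^2/2)\int_0^t \mathcal{N}_1(\tau)^2\,d\tau$; the first term on the RHS is bounded by $\mathcal{Q}$ because $1 \le m+m^{-2\sigma}$ on $\mathds{R}_{>0}$; and $\mathcal{S}^R(m)\le k_3\varphi(0)m$ turns the last term into $k_3(\eta-1)\varphi(0) \int_0^t \mathcal{N}_1(\tau)\,d\tau$. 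Dividing through by $\lambda^2/2$ gives \eqref{LemmaestimateCMFEeq1}.

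For \eqref{LemmaestimateCMFEeq2}, I would take $\Theta(m)=\lambda^{\dag}\wedge m$ in Lemma \ref{Identity Multifragment} on $[s,t]$ and let $\lambda^{\dag}\to\infty$. As in Lemma \ref{Lemma52}, the associated $\tilde{\Theta}$ is non-positive, so the coagulation contribution is $\le 0$. Mass conservation \eqref{MCP} forces $\Pi_{\Theta}(m)=0$ for $m\le \lambda^{\dag}$, while for $m>\lambda^{\dag}$ a direct computation gives
\begin{align*}
\Pi_{\Theta}(m) = \int_0^{\lambda^{\dag}} b(m^{\ast}|m)(m^{\ast}-\lambda^{\dag})\,dm^{\ast} + \lambda^{\dag}(\eta(m)-1) \le \lambda^{\dag}(\eta-1).
\end{align*}
Combined with $\mathcal{S}^R(m)\le k_3\varphi(\lambda^{\dag})m$ on $(\lambda^{\dag},\infty)$ (by monotonicity of $\varphi$), the fragmentation contribution is bounded by $k_3(\eta-1)\lambda^{\dag}\varphi(\lambda^{\dag})\int_s^t\!\int_{\lambda^{\dag}}^{\infty} m g(m,\tau)\,dm\,d\tau$. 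Once this is shown to vanish, monotone convergence on the LHS (as $(\lambda^{\dag}\wedge m)\uparrow m$) yields $\mathcal{N}_1(t)\le \mathcal{N}_1(s)$.

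The main obstacle is verifying that the fragmentation contribution actually vanishes in the limit: one must show $\lambda^{\dag}\varphi(\lambda^{\dag})\int_{\lambda^{\dag}}^{\infty} m g\,dm \to 0$. Although absolute continuity of the finite integral $\int_0^{\infty} m g \le \mathcal{N}_1^{in}$ gives $\int_{\lambda^{\dag}}^{\infty} m g \to 0$, the extra $\lambda^{\dag}$ factor is not automatically absorbed. The cleanest remedy is to replace the crude estimate above by the pointwise bound $\Pi_{\Theta}(m)\le m(\eta-1)$ (valid since $\int_0^m b\, m^{\ast}\,dm^{\ast}=m$) and apply dominated convergence, which requires controlling a slightly higher moment of $g$; alternatively one invokes the finer tail decay of $\varphi$ packaged in the weak-fragmentation condition \eqref{Nondecreasing function}. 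It is precisely this interplay between the decay of $\varphi$ and the polynomial growth of the coagulation kernel that the lemma seeks to capture.
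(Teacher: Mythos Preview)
Your approach matches the paper's almost exactly. For \eqref{LemmaestimateCMFEeq1} the paper also (despite a misleading cross-reference to \eqref{Identity506}) effectively takes $\Theta\equiv 1$ in Lemma~\ref{Identity Multifragment}, uses $\tilde\Theta=-1$ and $\Pi_\Theta\le\eta-1$, bounds $C^K$ from below by $k_1\lambda^2 mm^{\ast}$ via \eqref{condicoag}--\eqref{coagkerGel}, and estimates $\int g^{in}\le\mathcal Q$ and $\mathcal S^R(m)\le k_3\varphi(0)m$ precisely as you do. For \eqref{LemmaestimateCMFEeq2} the paper likewise takes $\Theta(m)=m\wedge\lambda^\dag$, notes $\tilde\Theta\le 0$, bounds $\Pi_\Theta(m)\le\lambda^\dag(\eta-1)$ on $(\lambda^\dag,\infty)$, and arrives at the same fragmentation remainder
\[
k_3(\eta-1)\,\lambda^\dag\varphi(\lambda^\dag)\int_s^t\!\int_{\lambda^\dag}^\infty m\,g(m,\tau)\,dm\,d\tau .
\]

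The concern you raise in your final paragraph is legitimate and is not resolved in the paper either: having reached the same remainder, the paper simply invokes $g\in L^\infty(0,t;L^1_{-2\sigma,1})$ and dominated convergence, without explaining why the product $\lambda^\dag\varphi(\lambda^\dag)\int_{\lambda^\dag}^\infty m g\,dm$ tends to zero---only $\varphi(\lambda^\dag)\to 0$ and $\int_{\lambda^\dag}^\infty m g\to 0$ are available, and neither absorbs the extra factor $\lambda^\dag$. Your sharper bound $\Pi_\Theta(m)\le m-\lambda^\dag$ (from \eqref{MCP}) is correct, but as you note it trades the bad factor $\lambda^\dag$ for an uncontrolled weighted second moment $\int \varphi(m)m^2 g\,dm$. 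So the passage to the limit is a genuine gap shared by both arguments; closing it cleanly appears to require an additional mild hypothesis such as $\sup_{m} m\varphi(m)<\infty$, which is consistent with---but not implied by---the weak-fragmentation condition \eqref{Nondecreasing function}.
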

\begin{proof}
Set $\Theta(m)= \min\{m, \lambda^{\dag} \}$, then the corresponding  $\tilde{ \Theta}$ is
 \[
\tilde{ \Theta}(m, m^{\ast}):=\begin{cases}
0,\ & \text{if}\ m+m^{\ast} < \lambda^{\dag}, \ m < \lambda^{\dag},\ m^{\ast} < \lambda^{\dag}, \\
\lambda^{\dag}-(m+m^{\ast}),\ &  \text{if}\ m+m^{\ast} \ge \lambda^{\dag},\ m < \lambda,\ m^{\ast} < \lambda^{\dag},\\
-m, \ &  \text{if}\ m+m^{\ast} \ge \lambda^{\dag},\ m < \lambda^{\dag},\ m^{\ast} \ge \lambda^{\dag},\\
- m^{\ast}, \  &  \text{if}\ m+m^{\ast} \ge \lambda^{\dag},\ m \ge \lambda^{\dag},\ m^{\ast} < \lambda^{\dag},\\
-\lambda^{\dag},\ &  \text{if}\ m+m^{\ast} \ge \lambda^{\dag},\ m > \lambda^{\dag},\ m^{\ast} > \lambda^{\dag}.
\end{cases}
\]
 It is clear that $\tilde{ \Theta}(m, m^{\ast}) \le 0$ and $\Pi_{\Theta}(m) \le \lambda^{\dag} (\eta -1)$. It follows from Lemma \ref{Identity Multifragment} that
\begin{align}\label{Identity505}
\int_0^{\infty} \min\{m, \lambda^{\dag} \} & \{ g(m, t)  - g(m, s) \} dm  \nonumber\\
\le  & k_3 \lambda^{\dag} (\eta -1)  \varphi(\lambda^{\dag}) \int_s^t \int_{\lambda^{\dag}}^{\infty} m g(m, \tau)dm d\tau.
\end{align}
This implies that
\begin{align}\label{Identity506}
\int_0^{\lambda^{\dag}}  m \{ g(m, t)  - g(m, s) \} & dm + \int_{\lambda^{\dag}}^{\infty} \lambda^{\dag}   \{ g(m, t)  - g(m, s) \} dm \nonumber\\
\le & k_3 \lambda^{\dag} (\eta -1)  \varphi(\lambda^{\dag}) \int_s^t \int_{\lambda^{\dag}}^{\infty} m g(m, \tau)dm d\tau.
\end{align}
Since $g \in L^{\infty}(0, t; L^1_{-2\sigma, 1})$ then as $\lambda^{\dag} \to \infty$ and the Lebesgue's dominated convergence theorem, we obtain \eqref{LemmaestimateCMFEeq2}.
Again, using \eqref{coagkerGel}, \eqref{SelectionRate1} and \eqref{Initialdatafinite} into \eqref{Identity506} and set $s=0$, we get
\begin{align}\label{Identity502}
\frac{\lambda^2}{2} \int_0^t \int_0^{\infty} \int_{0}^{\infty} &  m m^{\ast} g(m, \tau) g(m^{\ast}, \tau)dm^{\ast} dm d\tau \nonumber\\
\le & \int_0^{\infty} g^{in}(m) dm +k_3 (\eta -1) \int_0^t \int_0^{\infty} \varphi(m) m  g(m, \tau)dm d\tau \nonumber\\
\le & \mathcal{Q} + k_3 (\eta -1) \varphi(0) \int_0^t \int_0^{\infty}  m g(m, \tau)dm d\tau.
\end{align}
Next, applying \eqref{condicoag} to \eqref{Identity502}, we have
\begin{align*}
\int_0^t \mathcal{N}_1^2(\tau) d\tau \le & \frac{2}{\lambda^2} \mathcal{Q} + \frac{2}{\lambda^2} k_3 (\eta -1) \varphi(0)
  \int_0^t \mathcal{N}_1(\tau)d\tau  \nonumber\\
  \le &  \frac{2}{\lambda^2} \mathcal{Q} + \frac{2}{\lambda^2} k_3 (\eta -1) \varphi(0) \int_0^t \mathcal{N}_1(\tau)d\tau.
\end{align*}
This proves \eqref{LemmaestimateCMFEeq1}.
\end{proof}

Now it is the right time to complete the proof of the Theorem \ref{Theorem3}.
\begin{proof} of Theorem \ref{Theorem3}: For $t \in \mathds{R}_{>0}$, we put
\begin{align*}
\mathcal{I}(t) := \int_0^t \mathcal{N}_1(\tau ) d\tau.
\end{align*}
On the one hand, it follows from the Jensen's inequality that
\begin{align*}
\mathcal{I}^2(t) =\bigg( \int_0^t \mathcal{N}_1(\tau ) d\tau \bigg)^2 \le \bigg( \int_0^t 1^2 d\tau \bigg) \bigg( \int_0^t \mathcal{N}_1^2(\tau ) d\tau \bigg) \le t \int_0^t \mathcal{N}_1^2 (\tau )d\tau.
\end{align*}
Substituting \eqref{LemmaestimateCMFEeq1} into above inequality, we obtain
\begin{align}\label{5IntegrabilityInequality1}
\mathcal{I}^2(t)  \le  & \frac{2t}{\lambda^2} \mathcal{Q} + \frac{2t}{\lambda^2} k_3 (\eta -1) \varphi(0) \int_0^t \mathcal{N}_1(\tau)d\tau \nonumber\\
 =  & \frac{2t}{\lambda^2} \mathcal{Q} + \frac{2t}{\lambda^2} k_3 (\eta -1) \varphi(0) \mathcal{I}(t).
\end{align}
After solving above quadratic inequality \eqref{5IntegrabilityInequality1}, we get
\begin{align}\label{5IntegrabilityInequality2}
\mathcal{I}(t)  \le \frac{k_3}{\lambda^2} t (\eta -1) \varphi(0) + \bigg[ \frac{k_3^2}{\lambda^4} t^2(\eta-1)^2 \varphi(0)^2+\frac{2\mathcal{Q}t} {\lambda^2}   \bigg]^{1/2}
\end{align}
From \eqref{LemmaestimateCMFEeq2} and \eqref{5IntegrabilityInequality2}, we obtain
\begin{align*}
\mathcal{N}_1(t)  \le \frac{k_3}{\lambda^2}  (\eta -1) \varphi(0) + \bigg[ \frac{k_3^2}{\lambda^4} (\eta-1)^2 \varphi(0)^2+\frac{2\mathcal{Q}} {t\lambda^2} \bigg]^{1/2}
\end{align*}
Finally, as $t \to \infty$, we thus have
\begin{align*}
\lim_{t \to \infty}\mathcal{N}_1(t)  \le \frac{2k_3}{\lambda^2}  (\eta -1) \varphi(0).
\end{align*}
This completes the proof.
\end{proof}


\bibliographystyle{plain}


\end{document}